\numberwithin{equation}{section}
\newcommand{\ddn}{{\mathrm{d}}}
\newcommand{\dd}{\,{\mathrm{d}}}
\newtheorem{theorem}{Theorem}[section]
\newtheorem{lemma}[theorem]{Lemma}
\newtheorem{remark}[theorem]{Remark}
\newtheorem{definition}[theorem]{Definition}
\begin{document}




\title{A Mixed Discrete--Continuous Fragmentation Model}


\author{Graham Baird}

\author{Endre~S{\"u}li}

\address{Mathematical Institute, University of Oxford, Woodstock Road, Oxford OX2 6GG, UK
email: \texttt{graham.baird@maths.ox.ac.uk}, \texttt{endre.suli@maths.ox.ac.uk}}

\begin{abstract}
\noindent Motivated by the occurrence of ``shattering'' mass-loss observed in purely continuous fragmentation
models, this work concerns the development and the mathematical analysis of a new class of hybrid discrete--continuous fragmentation models. Once established, the model, which takes the form of an integro-differential equation coupled with a system of ordinary differential equations, is subjected to a rigorous mathematical analysis, using the theory and methods of operator semigroups and their generators. Most notably, by applying the theory relating to the Kato--Voigt perturbation theorem, honest substochastic semigroups and operator matrices, the
existence of a unique, differentiable solution to the model is established. This solution is also shown to preserve nonnegativity and conserve mass.
\end{abstract}

\keywords{Fragmentation models, mixed discrete--continuous fragmentation model, substochastic semigroups, existence and uniqueness of solution}

\subjclass[2010]{35F10, 45K05, 47D06, 47N20}


\maketitle 


\section{Introduction}

\noindent The mathematical modelling of fragmentation, and the reverse coagulation process have a long history, with the first work dating back to \cite{smol17}. Such models have found applications in areas as diverse as polymer science \cite{ziff80}, population dynamics \cite{degond17} and astrophysics \cite{johansen08}. The models of such processes typically classify the entities within the system according to some physical state variable, for example their volume, area or mass, the aim being then to determine the evolution of the system with respect to this variable as time progresses. Models are typically classified as either discrete or continuous, depending on the nature of the state variable of interest. Generally, there are great similarities between the two forms, with each continuous model having an analogous discrete version and vice versa. The selection of the form to use is largely a modelling choice, and depends on the scale of the phenomenon to be described.\\

\noindent In this paper we shall be exclusively considering fragmentation processes, with no coagulation mechanism involved. With continuous models of such processes, difficulties can arise when the break-up rate for particles blows up as their size goes to zero and particles are allowed to get too small too quickly. The unbounded fragmentation rate can result in a runaway fragmentation process and a loss of mass unaccounted for in the model formulation. This loss of mass was observed by McGrady and Ziff in \cite{ziff87}, the process was termed `shattering' and attributed to the creation of `dust' particles with zero size but positive mass. \\

\noindent In \cite{huang96}, Huang \textit{et al}. suggest that such a runaway fragmentation process is unphysical, and that at some point particles become too small to break-up any further. Their proposed model includes a cut-off size $x_c>0$, above which particles are able to fragment as usual. However, once a particle's size drops below $x_c$ it ceases to be able to fragment, becoming dormant. In this paper we shall present a variation on this theme, introducing a mixed discrete--continuous model as a solution to the problem of shattering. \\

\noindent Considering the nature of the material that is undergoing fragmentation  that we are attempting to model, on close inspection we might expect there to be some minimum fundamental unit (monomer) from which all particles are built up. Whatever level this occurs at, once this is imposed, the runaway fragmentation, associated with shattering, is prohibited. Such a framework necessarily induces a discrete nature on the material and suggests the use of a discrete model. However, on larger scales where a typical particle is composed of a large number of such monomers and hence where the mass of a particle is highly divisible, the continuum model may provide an adequate and convenient representation. With our hybrid model we attempt to reconcile these factors. \\

\noindent In our model, the smaller particles are considered to be comprised of collections of monomers. Through suitable scaling, the monomers can be assumed to have unit mass and therefore the smaller particles take positive integer mass, up to some cut-off value $N\in \mathbb{N}$. However, above this cut-off particle mass is considered as a continuous variable. A set-up such as this produces a dual regime model, with a discrete mass regime below the cut-off and a continuous mass regime above.\\

\noindent Let us denote by $u_C(x,t)$ the particle mass density within the continuous mass regime. The evolution of $u_C(x,t)$ is then governed by the continuous multiple fragmentation equation below:
\begin{align}\label{equation301}
\hspace{-8mm}\frac{\partial u_C(x,t)}{\partial t}&=-a(x)u_C(x,t)+\int_{x}^{\infty}a(y)b(x|y)u_C(y,t)\dd y, \hspace{1.92mm}x>N,\hspace{1.2mm}t>0,\\
                            u_C(x,0)&=c_{0}(x).\nonumber
\end{align}
\noindent This equation is of a similar form to that introduced in \cite{ziff87}. As in that model, the function $a(x)$ provides the fragmentation rate for a particle of mass $x$, whilst $b(x|y)$ represents the distribution of particles of mass $x>N$ resulting from the break-up of a particle of mass $y>x$. The functions $a$ and $b$ are assumed to be nonnegative measurable functions, defined on $\left(N,\infty\right)$ and $\left(N,\infty\right)\times\left(N,\infty\right)$, respectively. We also require $b(x|y)=0$ for $x>y$, since no particle resulting from a fragmentation event can have a mass exceeding the original parent particle. Finally, $c_{0}(x)$ details the initial mass distribution within the continuous regime.\\

\noindent The first term on the right-hand side of equation~\eqref{equation301} is a loss term; accounting for those particles of mass $x>N$ which are lost due to their fragmentation into smaller particles. The second term, involving the integral, is a gain term and corresponds to the increase we see in particles of mass $x$, due to the break-up of larger particles.\\

\noindent Turning our attention to the discrete mass regime, let $u_{Di}(t)$ denote the concentration of $i-$mer particles and $u_{D}(t)$ the $N$-vector taking these values as entries. The change in the values $u_{Di}(t)$, $i=1,\dots,N$, is governed by the equations:
\begin{align}\label{equation302}
\hspace{-80mm}\frac{\ddn u_{Di}(t)}{\ddn t} &\hspace{-.7mm}=\hspace{-.7mm}-a_iu_{Di}(t)+\hspace{-1.9mm}\sum_{j=i+1}^{N}\hspace{-1.2mm}a_jb_{i,j}u_{Dj}(t)
+\hspace{-1mm}\int_{N}^{\infty}\hspace{-2mm}a(y)b_i(y)u_{\small{C}}(y,t)\dd y,\hspace{.7mm}\hspace{1.1mm}t>0,\\
 u_{D}(0)&=d_{0}.\nonumber
\end{align}
\noindent In the case of $i=N$, the second term becomes an empty sum and is taken to be $0$. The values $a_i$ give the rates at which $i-$mer particles fragment, with $a_1=0$. The quantities $b_{i,j}$ give the expected number of $i-$mers produced from the fragmentation of a $j-$mer and the functions $b_i(y)$ give the expected number of $i-$mers produced from the fragmentation of a particle of mass $y>N$. The underlying physics demands that each $a_i$, $b_{i,j}$ and $b_i(y)$ be nonnegative. Finally, $d_{0}$ is the $N$-vector giving the  initial mass distribution within the discrete regime.\\

\noindent Analogously to equation~\eqref{equation301}, the first term on the right-hand side of equation~\eqref{equation302} is a loss term, accounting for the loss in $i-$mer particles due to their fragmentation into smaller particles. The remaining two terms are gain terms, with the term involving the summation giving the increase in $i-$mers due to the break-up of larger $j-$mers and the integral term representing production of new $i-$mers from the fragmentation of larger continuous mass particles.\\

\noindent In any fragmentation event, mass is simply redistributed from the larger particle to the smaller resulting particles, but the total mass involved should be conserved. This gives us the following two mass conservation conditions to supplement equations~\eqref{equation301} and~\eqref{equation302}:
\begin{align}
\label{equation303}&\int_{N}^{y}xb(x|y)\dd x+\sum_{j=1}^{N}jb_j(y)=y\hspace{2mm} \text{for} \hspace{2mm}y>N,\\
\label{equation304}&\sum_{j=1}^{i-1}jb_{j,i}=i\hspace{2mm} \text{for} \hspace{2mm}i=2,\ldots,N.
\end{align}
The condition~\eqref{equation303} is an expression of mass conservation upon the fragmentation of a particle from the continuous mass regime. The integral term gives the expected mass accounted for by resulting particles remaining within the continuous mass regime, that is those with mass lying in the range $N<x<y$, whereas the summation term represents the expected total mass attributable to the resulting particles in the discrete mass regime, i.e. those taking an integer value from $1$ to $N$. The equation~\eqref{equation304} comes from the conservation of mass when a particle from the discrete mass regime breaks up. Only one term is required for this condition as when a particle of discrete mass fragments, all resulting particles must themselves lie within the discrete mass regime.\\

\noindent The necessity of these conditions can be seen from equations~\eqref{equation301} and~\eqref{equation302}. If we integrate the right-hand side of~\eqref{equation301} over $(N,\infty)$ with respect to the measure $x\dd x$ and if we multiply the right-hand side of~\eqref{equation302} by $i$ and then sum over $i$ from $1$ to $N$, then formally, the sum of these two quantities gives us the rate of change of the total mass. Equating the continuous and discrete components of the resulting expression to zero, provides us with the conditions~\eqref{equation303} and~\eqref{equation304} respectively. However, these conditions alone are insufficient to guarantee mass conservation since the validity of the associated calculation requires a degree of regularity from the solutions, which is not known a priori.

\section{Preliminaries}
\noindent In the analysis of our equations we shall be relying heavily on the methods and theory of operator semigroups. In particular the concept of substochastic semigroups, the Kato--Voigt perturbation theorem and the notion of semigroup honesty. Additionally, in order to handle our system of equations we shall apply results concerning operator matrices acting on product spaces, and the semigroups they generate. For the sake of completeness we include here a rundown of the most significant results for our purposes.

\begin{definition}\label{definition262}Let $X$ denote a Banach space of the type $L_1(\Omega,\mu)$ with positive cone $X_+$, where $\Omega$ is a measurable subset of $\mathbb{R}^n$ and $\mu$ is a nonnegative measure. Additionally, let $(T(t))_{t\geq0}$ be a $C_{0}$-semigroup on $X$. We say that $(T(t))_{t\geq0}$ is a \textit{substochastic} semigroup on $X$ if, for each $t\geq0$, $\|T(t)\|\leq1$ and $T(t)f\in X_+$ for all $f\in X_+$. If additionally $\|T(t)f\|=\|f\|$ for all $t\geq0$ when $f\in X_+$, then we say that $(T(t))_{t\geq0}$ is a \textit{stochastic} semigroup.
\end{definition}

\noindent When formulating our equation of interest as an abstract Cauchy problem, it is common that the terms which appear are more naturally expressed as the sum of two or more separate operators, perhaps due to the differing nature of the effects they are representing. Very often checking the conditions of the Hille--Yosida theorem directly for the sum would prove intractable. In situations such as this, it is often easier to consider the operators individually, making use of a set of theorems known as \textit{perturbation results}. In most of these results it is assumed that one of the individual operators, generates a $C_{0}$-semigroup. The question then arises under what conditions on the other operator the combined operator sum (or some related operator) forms a generator of a $C_{0}$-semigroup.

\begin{theorem} \label{theorem285} Let the linear operator $(A,D(A))$ generate a $C_{0}$-semigroup $(T(t))_{t\geq0}$, on a Banach space $X$, satisfying the standard bound
\[\|T(t)\| \leq M{\mathrm{e}}^{\omega t} \hspace{3mm} \textup{for} \hspace{3mm} t\geq0,\]
for some $M>0$ and $\omega\geq0$.
If $B\in B(X)$, that is $B$ is a bounded linear operator from $X$ into $X$, then the sum $A+B$ with $D(A+B)=D(A)$ generates a $C_{0}$-semigroup, $(S(t))_{t\geq0}$, satisfying
\[\|S(t)\| \leq M{\mathrm{e}}^{(\omega+M\|B\| )t} \hspace{3mm} \textup{for} \hspace{3mm} t\geq0.\]
\end{theorem}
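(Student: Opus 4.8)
The plan is to construct the semigroup $(S(t))_{t\ge0}$ explicitly via the Dyson--Phillips (variation-of-constants) series, and then to verify directly that it is a $C_0$-semigroup with the stated norm bound whose generator is exactly $A+B$ with domain $D(A)$. First I would set $V_0(t):=T(t)$ and, recursively, $V_{n}(t)x:=\int_0^t T(t-s)\,B\,V_{n-1}(s)x\dd s$ for $n\ge1$ and $x\in X$; since each integrand is strongly continuous in $s$ (strong continuity and local boundedness of $(T(t))_{t\ge0}$, boundedness of $B$, and the inductively established continuity of $V_{n-1}(\cdot)x$), each $V_n(t)$ is a well-defined bounded operator on $X$. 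A straightforward induction on $n$, using $\|T(t)\|\le M\mathrm{e}^{\omega t}$ and the elementary identity $\int_0^t s^{n-1}/(n-1)!\dd s=t^n/n!$, then yields the estimate $\|V_n(t)\|\le M^{n+1}\|B\|^{n}t^{n}\mathrm{e}^{\omega t}/n!$.

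Next I would define $S(t):=\sum_{n=0}^{\infty}V_n(t)$. The estimate above shows the series converges in $B(X)$, uniformly for $t$ in any bounded interval, and summing the exponential series gives precisely $\|S(t)\|\le M\mathrm{e}^{\omega t}\mathrm{e}^{M\|B\|t}=M\mathrm{e}^{(\omega+M\|B\|)t}$. Because each map $t\mapsto V_n(t)x$ is continuous and the convergence is uniform on bounded intervals, $t\mapsto S(t)x$ is continuous for every $x\in X$; moreover $S(0)=V_0(0)=T(0)=I$. Hence $(S(t))_{t\ge0}$ is at least strongly continuous with $S(0)=I$ and satisfies the required bound.

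The semigroup law is the step that needs the most care. Summing the defining recursion for the $V_n$ shows that $S$ solves the Volterra integral equation $S(t)x=T(t)x+\int_0^t T(t-s)\,B\,S(s)x\dd s$. Fixing $s\ge0$ and splitting the integral in $S(t+s)x$ at the point $r=s$ (and using the semigroup property of $(T(t))_{t\ge0}$), one checks that both $t\mapsto S(t+s)x$ and $t\mapsto S(t)S(s)x$ satisfy the same equation $X(t)=T(t)S(s)x+\int_0^t T(t-\sigma)\,B\,X(\sigma)\dd\sigma$; since the kernel $r\mapsto T(r)B$ is locally bounded in norm, a Gronwall argument gives uniqueness of solutions to this linear Volterra equation, whence $S(t+s)=S(t)S(s)$. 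Thus $(S(t))_{t\ge0}$ is a $C_0$-semigroup.

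Finally I would identify its generator $(C,D(C))$. Dividing the integral equation by $t$ and letting $t\to0^+$: for $x\in D(A)$ the first term tends to $Ax$, and the integral term tends to $Bx$ by continuity of the integrand at the origin, so $D(A)\subseteq D(C)$ and $Cx=Ax+Bx$ there, i.e.\ $A+B\subseteq C$. For the reverse inclusion, observe that for $\operatorname{Re}\lambda$ large enough one has $\lambda\in\rho(A)$ and $\|B(\lambda-A)^{-1}\|<1$, so $\lambda-(A+B)=\bigl(I-B(\lambda-A)^{-1}\bigr)(\lambda-A)$ is boundedly invertible on $D(A)$, giving $\lambda\in\rho(A+B)$; and $\lambda\in\rho(C)$ for $\lambda$ large since $C$ is a generator. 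Any extension of $A+B$ that shares a point of its resolvent set must coincide with it, so $C=A+B$ with $D(C)=D(A)$, which completes the proof. The genuinely delicate points are purely technical: justifying the strong continuity/measurability needed to make the Bochner integrals and the term-by-term estimates legitimate, and the Gronwall uniqueness step underpinning the semigroup property.
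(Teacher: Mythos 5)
Your proposal is correct. The paper does not prove this result itself --- it simply cites the bounded perturbation theorem from Engel and Nagel --- and your Dyson--Phillips series construction, with the Volterra integral equation, the Gronwall uniqueness step for the semigroup law, and the resolvent (Neumann series) argument identifying the generator as exactly $A+B$ on $D(A)$, is essentially the standard proof given in that reference, including the exact bound $\|S(t)\|\leq M{\mathrm{e}}^{(\omega+M\|B\|)t}$ obtained by summing $\|V_n(t)\|\leq M^{n+1}\|B\|^n t^n {\mathrm{e}}^{\omega t}/n!$.
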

\begin{proof}See \cite[Chapter 3, Theorem 1.3]{engel00}.
\end{proof}

\noindent For some of the upcoming applications, the requirement that $B$ be bounded will turn out to be too restrictive. We therefore turn to an alternative perturbation result, namely the Kato--Voigt perturbation theorem. This result does not rely on $B$ being bounded. However, in removing this restriction we lose $A+B$ as our generator and instead we can only say that some extension of $A+B$ is a generator.

\begin{theorem} \label{theorem288} \textnormal{(Kato--Voigt Perturbation Theorem)} Let $X=L_1(\Omega,\mu)$ and suppose the linear operators $A$ and $B$, acting on $X$, satisfy the conditions:
\begin{enumerate}
 \item $(A,D(A))$ generates a substochastic semigroup $(G_A(t))_{t\geq0}$ on X;
 \item $B$ is a positive linear operator, that is $B:D(B)_+\mapsto X_+$, with domain satisfying $D(A)\subseteq D(B)$;
 \item For all $f\in D(A)_+$,
 \[\int_{\Omega}(Af+Bf)\dd \mu\leq 0.\]
\end{enumerate}
Then, there exists an extension $(K,D(K))$ of the operator $(A+B,D(A))$, which generates a substochastic semigroup $\left(G_K(t)\right)_{t\geq0}$.
\end{theorem}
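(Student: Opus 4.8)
The plan is to prove the theorem in two stages: first establish the result with $B$ replaced by the ``damped'' perturbation $rB$ for each fixed $r\in[0,1)$, where hypothesis (3) leaves a genuine dissipativity margin, and then recover the operator $K$ by letting $r\uparrow1$ through a monotone limiting procedure carried out at the level of resolvents.

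For the first stage, fix $r\in[0,1)$ and construct the resolvent of $A+rB$ by a perturbation series. Since $(A,D(A))$ generates a substochastic semigroup, $(0,\infty)\subseteq\rho(A)$, the resolvent $R(\lambda,A):=(\lambda I-A)^{-1}$ is positive with $\|\lambda R(\lambda,A)\|\le1$, and it maps $X$ into $D(A)\subseteq D(B)$, so $BR(\lambda,A)$ is everywhere defined and positive. For $f\in X_+$, applying hypothesis (3) to $R(\lambda,A)f\in D(A)_+$ and using $AR(\lambda,A)f=\lambda R(\lambda,A)f-f$ gives
\[
  \int_\Omega BR(\lambda,A)f\dd\mu \;\le\; \|f\|-\lambda\|R(\lambda,A)f\| \;\le\;\|f\|;
\]
as $X$ is an $L_1$-space and $BR(\lambda,A)\ge0$, this yields $\|BR(\lambda,A)\|\le1$, so the power series
\[
  R_r(\lambda) \;:=\; \sum_{n=0}^{\infty} r^{n}\,R(\lambda,A)\bigl(BR(\lambda,A)\bigr)^{n}
\]
converges in operator norm and has nonnegative terms. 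Writing $g_n:=r^n(BR(\lambda,A))^n f$ for $f\in X_+$, the displayed inequality together with $r\le1$ gives $\|g_{n+1}\|\le\|g_n\|-\lambda\|R(\lambda,A)g_n\|$, whence $\lambda\sum_{n\ge0}\|R(\lambda,A)g_n\|\le\|f\|$; since the summands lie in $X_+$ this is precisely $\|\lambda R_r(\lambda)f\|\le\|f\|$, and so $\|\lambda R_r(\lambda)\|\le1$ for every $\lambda>0$. A short computation exploiting the closedness of $A$ then shows that $R_r(\lambda)$ maps $X$ into $D(A)$ and is a two-sided inverse of $\lambda I-(A+rB)$, so $(0,\infty)\subseteq\rho(A+rB)$ with $\|R(\lambda,A+rB)\|\le1/\lambda$. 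As $D(A)$ is dense, the Hille--Yosida theorem gives that $(A+rB,D(A))$ generates a contraction $C_0$-semigroup $(G_r(t))_{t\ge0}$, which is positive --- hence substochastic --- by positivity of $R_r(\lambda)$ via the exponential formula $G_r(t)f=\lim_{n\to\infty}\bigl(\tfrac{n}{t}R(\tfrac{n}{t},A+rB)\bigr)^{n}f$.

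For the second stage, observe that $r\mapsto R_r(\lambda)f$ is a power series in $r$ with nonnegative-operator coefficients, hence nondecreasing on $X_+$; moreover the partial-sum estimate from the first stage keeps it bounded by $\|f\|/\lambda$ up to and including $r=1$, so the monotone convergence theorem in $L_1(\Omega,\mu)$ produces $R(\lambda)f:=\lim_{r\uparrow1}R_r(\lambda)f=\sum_{n\ge0}R(\lambda,A)(BR(\lambda,A))^n f$, defining a positive bounded operator with $\|\lambda R(\lambda)\|\le1$. Passing to the limit in the resolvent identity shows $(R(\lambda))_{\lambda>0}$ is a pseudoresolvent; and from $\lambda R_r(\lambda)f-f=R_r(\lambda)(A+rB)f$ for $f\in D(A)$ one obtains the uniform bound $\|\lambda R_r(\lambda)f-f\|\le\lambda^{-1}(\|Af\|+\|Bf\|)$, so that $\lambda R(\lambda)f\to f$ as $\lambda\to\infty$ for $f$ in the dense set $D(A)$. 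This forces $R(\lambda)$ to be injective with dense range, whence there is a unique closed, densely defined operator $(K,D(K))$ with $(\lambda I-K)^{-1}=R(\lambda)$ for $\lambda>0$; the Hille--Yosida theorem together with positivity of $R(\lambda)$ then yields the substochastic semigroup $(G_K(t))_{t\ge0}$ generated by $K$. Finally, letting $r\uparrow1$ in $R_r(\lambda)(\lambda I-A-rB)f=f$ (permissible since $D(A)\subseteq D(B)$) gives $R(\lambda)(\lambda I-A-B)f=f$ for $f\in D(A)$, i.e.\ $Kf=(A+B)f$ there; hence $(K,D(K))$ extends $(A+B,D(A))$, as required.

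The perturbation-series bookkeeping of the first stage is routine. The main obstacle is the passage $r\uparrow1$: one must verify that the monotone limit $R(\lambda)$ of the resolvents is a genuine resolvent --- that is, that the pseudoresolvent is injective with dense range --- which rests on the uniform approximation estimate on $D(A)$, and one must accept that only an \emph{extension} of $A+B$ is obtained, since the domain $D(K)$ is not identified and may be strictly larger than $D(A)$.
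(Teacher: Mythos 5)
Your argument is correct: the damped perturbation series for the resolvent of $A+rB$, the contraction and positivity estimates extracted from hypothesis (3) via $AR(\lambda,A)f=\lambda R(\lambda,A)f-f$, and the monotone passage $r\uparrow 1$ at the level of resolvents (checking that the limiting pseudoresolvent is injective with dense range and hence the resolvent of a generator extending $A+B$) constitute the classical Kato--Voigt construction. The paper does not prove this theorem itself but simply cites Banasiak and Arlotti (Corollary 5.17), whose proof proceeds along essentially these same lines, so your proposal matches the intended argument.
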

\begin{proof}See \cite[Corollary 5.17]{banasiak06Pert}.
\end{proof}

\begin{remark}\label{remark283}
\noindent For reasons which will become apparent in the upcoming definition, it is common to express condition $(iii)$ in the form
\begin{equation} \label{equation2872}
\int_{\Omega}(A+B)f\dd \mu=-c(f)\hspace{2mm}\text{for}\hspace{2mm}f\in D(A)_+,
\end{equation}
where $c$ is some nonnegative linear functional defined on $D(A)$.
\end{remark}

\noindent Theorem~\ref{theorem288} was first applied in field of fragmentation equations by Banasiak in \cite{banasiak01}, where a particular case of the multiple fragmentation equation was examined, and more generally by Lamb \cite{lamb04} and Banasiak and Arlotti \cite{banasiak06Pert} to establish the existence of unique mass-conserving positive solutions  under suitable constraints on the fragmentation rate. This approach has proved particularly fruitful and has been applied to a range of coagulation--fragmentation models, for example in \cite{banasiak03, banasiak09, blair07, smith10, smith12}. However, a practical downside of this result is that it guarantees only the existence of a generator $K$, and provides no indication of how this operator relates to $A+B$. The nature of the generator $K$ is closely related to the concept of semigroup honesty, which we now define.

\begin{definition}\label{definition286}The positive semigroup $\left(G_K(t)\right)_{t\geq0}$, generated by the extension $K$ of $A+B$ from Theorem~\ref{theorem288}, is \emph{honest} if the functional linear $c$, given by \eqref{equation2872}, extends to $D(K)$, and for all $u_{0}\in D(K)_+$, the nonnegative solution $u(t)=G_K(t)u_{0}$ to
\[\frac{\ddn}{\ddn t}u(t)=Ku(t),\hspace{3mm}t>0;\hspace{3mm}u(0)=u_0,\]
 satisfies
\vspace{3mm}
\[\frac{\ddn}{\ddn t} \left\|u(t)\right\|
=\frac{\ddn}{\ddn t}\int_{\Omega}u(t)\dd \mu=-c(u(t)),\]
\end{definition}
\noindent where $\left\|\cdot\right\|$ is the norm of the space $x$ from Theorem~\ref{theorem288}. The following result provides necessary and sufficient conditions on the generator $K$ such that the related semigroup is honest.
\begin{theorem}\label{theorem289} The semigroup $\left(G_K(t)\right)_{t\geq0}$ is honest if and only if $K=\overline{A+B}$, where $\overline{A+B}$ denotes the closure of $A+B$.
\end{theorem}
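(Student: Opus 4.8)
The plan is to re-express honesty as the vanishing of a scalar ``defect'' attached to the resolvent of $K$, and then to recognise that defect as the exact obstruction to $D(K)$ collapsing onto $D(\overline{A+B})$. The inclusion $\overline{A+B}\subseteq K$ is automatic, since $K$ is closed and $K\supseteq A+B$, so the whole content is the reverse inclusion.

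First I would unwind the construction underlying Theorem~\ref{theorem288}. For $\lambda>0$ set $R_\lambda:=(\lambda-A)^{-1}$ and $Q_\lambda:=BR_\lambda$; condition~(iii) gives, for $f\in X_+$,
\[
\|Q_\lambda f\|=\|f\|-\lambda\|R_\lambda f\|-c(R_\lambda f)\le\|f\|,
\]
so $Q_\lambda$ is a positive contraction, and the Kato--Voigt generator is realised through the monotone, strongly convergent limit $(\lambda-K)^{-1}f=\lim_{r\uparrow1}(\lambda-A-rB)^{-1}f=\sum_{n\ge0}R_\lambda Q_\lambda^{n}f$ for $f\in X_+$. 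Applying the displayed identity with $f$ replaced by $Q_\lambda^{n}f$ and summing over $n$ produces, in the limit, the mass balance
\[
\lambda\bigl\|(\lambda-K)^{-1}f\bigr\|+c_\lambda\!\bigl((\lambda-K)^{-1}f\bigr)+\theta_\lambda(f)=\|f\|,\qquad f\in X_+,
\]
where $c_\lambda(u):=\sum_{n\ge0}c(R_\lambda Q_\lambda^{n}f)$ for $u=(\lambda-K)^{-1}f$ (extended by linearity) is the extension of $c$ to $D(K)$ furnished by the construction, and $\theta_\lambda(f):=\lim_{n\to\infty}\|Q_\lambda^{n}f\|_X=\lim_{r\uparrow1}(1-r)\|B(\lambda-A-rB)^{-1}f\|_X\ge0$ is the defect. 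Applying the Laplace transform in $t$ to the honesty relation $\tfrac{\ddn}{\ddn t}\|G_K(t)u_0\|=-c_\lambda(G_K(t)u_0)$ and using the positivity of $G_K$ (which lets $\|\cdot\|$ be replaced by $\int_\Omega\cdot\dd\mu$) shows that honesty is equivalent to $\theta_\lambda\equiv0$ on $X_+$; the resolvent identity then shows this holds for one $\lambda>0$ if and only if it holds for all.

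For the implication ``honest $\Rightarrow K=\overline{A+B}$'' I would fix $f\in X_+$, put $u_r:=(\lambda-A-rB)^{-1}f\in D(A)$, and use $u_r\uparrow u:=(\lambda-K)^{-1}f$ in $X$ together with $(A+B)u_r=\lambda u_r-f+(1-r)Bu_r$. Here $(1-r)Bu_r=\sum_{m\ge1}(1-r)r^{m-1}Q_\lambda^{m}f$ is a probability-weighted average of the nonnegative vectors $Q_\lambda^{m}f$, whose norms decrease to $\theta_\lambda(f)=0$, so an elementary Abel-summation estimate gives $(1-r)Bu_r\to0$ in $X$ as $r\uparrow1$. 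Hence $(A+B)u_r\to\lambda u-f=Ku$ while $u_r\to u$, so $u\in D(\overline{A+B})$ with $\overline{A+B}u=Ku$; since every element of $D(K)$ (the range of $(\lambda-K)^{-1}$) is a difference of two such $u$'s, $D(K)\subseteq D(\overline{A+B})$, and with the trivial inclusion this yields $K=\overline{A+B}$.

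The converse, ``$K=\overline{A+B}\Rightarrow$ honest'', is where I expect the real difficulty to lie. The plan is again to establish $\theta_\lambda\equiv0$: given $f\in X_+$ one writes $u=(\lambda-K)^{-1}f\in D(\overline{A+B})$, picks $v_k\in D(A)$ with $v_k\to u$ and $(A+B)v_k\to Ku$, and reads off $c(v_k)=-\int_\Omega(A+B)v_k\dd\mu\to-\int_\Omega Ku\dd\mu=\|f\|-\lambda\|u\|$ (substochasticity of $G_K$ ensuring $-\int_\Omega Ku\dd\mu\ge0$), while the construction's extension formula applied to the same $u$ gives $c_\lambda(u)=\lim_N\bigl(\|f\|-\lambda\|w_N\|-\|Q_\lambda^{N+1}f\|\bigr)=\|f\|-\lambda\|u\|-\theta_\lambda(f)$ with $w_N:=\sum_{n=0}^N R_\lambda Q_\lambda^{n}f\to u$. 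The crux---and the main obstacle---is to show that under the hypothesis $K=\overline{A+B}$ these two evaluations of the extended functional at $u$ must coincide, equivalently that the Neumann tail $Q_\lambda^{n}f=(B(\lambda-A)^{-1})^{n}f$ converges strongly to $0$, so that no mass is concealed in the ``$n\to\infty$'' limit of the expansion. Because $c$ is unbounded this is not a soft consequence of both approximating sequences converging to $u$; it is precisely here that the order structure of $L_1$, the monotonicity $w_N\uparrow u$ and the order-continuity of the norm must be used in earnest. Once this compatibility is secured, the mass balance forces $\theta_\lambda(f)=0$ for every $f\in X_+$, hence honesty.
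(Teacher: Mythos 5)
The paper offers no argument of its own here (its ``proof'' is the citation to Theorem 6.13 of Banasiak and Arlotti), so your proposal has to stand as a reconstruction of that result. Your machinery and your first implication are sound: the identity $\|Q_\lambda f\|=\|f\|-\lambda\|R_\lambda f\|-c(R_\lambda f)$ for $f\in X_+$ is correct, summing it along the Neumann series does give the resolvent balance $\lambda\|(\lambda-K)^{-1}f\|+c_\lambda((\lambda-K)^{-1}f)+\theta_\lambda(f)=\|f\|$, and the Abel-mean argument showing $(1-r)Bu_r\to0$ when $\theta_\lambda(f)=0$, followed by the splitting $f=f_+-f_-$, correctly yields $D(K)\subseteq D(\overline{A+B})$, hence ``honest $\Rightarrow K=\overline{A+B}$''. (You do invoke without proof the equivalence ``honest $\iff\theta_\lambda\equiv0$ on $X_+$''; that is itself a nontrivial intermediate theorem of the honesty theory, but treating it as known is defensible.)

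The genuine gap is the converse, and you flag it yourself: you never show that $K=\overline{A+B}$ forces $\theta_\lambda\equiv0$, only that it would follow ``once this compatibility is secured''. Trying to match two evaluations of the unbounded functional $c$ along different approximating sequences is indeed the wrong lever; the standard resolution is to dualise the defect instead of extending $c$. Since $\theta_\lambda$ is additive, positively homogeneous and dominated by the norm on $X_+$, it is represented by some $0\le\beta_\lambda\in X^*=L_\infty$; from the definition, $\theta_\lambda(Q_\lambda f)=\lim_{n\to\infty}\|Q_\lambda^{n+1}f\|=\theta_\lambda(f)$, i.e.\ $Q_\lambda^*\beta_\lambda=\beta_\lambda$, and therefore $\langle\beta_\lambda,(\lambda-A-B)g\rangle=\langle(I-Q_\lambda^*)\beta_\lambda,(\lambda-A)g\rangle=0$ for every $g\in D(A)$. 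If $K=\overline{A+B}$, then for $u\in D(K)$ one may take $v_k\in D(A)$ with $v_k\to u$ and $(A+B)v_k\to Ku$, so $\overline{(\lambda-A-B)D(A)}\supseteq(\lambda-K)D(K)=X$; thus $\beta_\lambda$ vanishes on a dense set, hence identically, giving $\theta_\lambda\equiv0$ and honesty. This bounded fixed-point functional $\beta_\lambda$ is the missing idea: with it the converse closes without ever comparing values of the unbounded $c$, which is precisely where your sketch stalls.
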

\begin{proof}See \cite[Theorem 6.13]{banasiak06Pert}.
\end{proof}

\noindent In the upcoming analysis we shall rely on results which allow us to establish this condition in practice and also explicitly obtain the generator $K$. However, their explanation is heavily dependent on the specific application and involves material which is not suitable for this section. Therefore, we leave the introduction of the aforementioned results until later, where they appear as Theorem~\ref{theorem302} and Lemma~\ref{lemma3002}.\\

\noindent The mixed discrete--continuous fragmentation model introduced above involves two equations, describing quantities which are fundamentally different in nature. When looking to reformulate these equations we find that the differing nature of the equations means that different spaces are best suited for their analysis. However, just as it is possible to express a system of $n$ scalar differential equations as a single equation in $\mathbb{R}^n$ using matrix notation, we may transform our system of abstract equations into a single abstract Cauchy problem. The underlying space is now a product space and the (generating) operator takes the form of a matrix whose entries are themselves operators which map from and to the relevant spaces. In our case, the specific nature of the problem means that the matrix in question will be a 2$\times$2 matrix of upper triangular form. The upcoming  Theorem~\ref{theorem290} gives sufficient conditions for such an operator to be a generator, as well as providing the semigroup generated. However, before we can outline the conditions of Theorem~\ref{theorem290}, we require one further definition and an associated result which we shall utilise when we later come to apply Theorem~\ref{theorem290}.

\begin{definition}\label{definition237} Let $(\hspace{.12mm}X,\hspace{0.25mm}\|\cdot\|_X\hspace{.15mm})$ \hspace{-1.6mm} and \hspace{-1.4mm} $(\hspace{.15mm}Y,\hspace{0.25mm}\|\cdot\|_Y\hspace{.15mm})$ be Banach spaces and let $A:D(A)\subseteq X  \rightarrow X$ and $B:D(B)\subseteq X \rightarrow Y$ be linear operators with $D(A)\subseteq D(B)$. We say that $B$ is $A$-\textit{bounded} \textup{(}or $B$ is \textit{relatively} $A$-\textit{bounded}\textup{)} if there exist nonnegative constants $a$ and $b$ such that
\begin{equation}\label{equation20909}
\|Bf\|_Y\leq a\|Af\|_X+b\|f\|_X\hspace{2mm}\text{for all}\hspace{2mm}f\in D(A).
\end{equation}
\noindent The infimum of the values of $a$ for which such a bound exists is known as the $A$-\textit{bound of} $B$.
\end{definition}

\begin{lemma}\label{lemma231} Let $(X,\|\cdot\|_X)$ and $(Y,\|\cdot\|_Y)$ be Banach spaces and suppose the linear operators $A:D(A)\subseteq X\rightarrow X$ and $B:D(B)\subseteq X \rightarrow Y$ have domains satisfying $D(A)\subseteq D(B)$, with $A$ additionally having a nonempty resolvent set $\rho(A)$. Then $B$ is $A$-bounded if and only if $BR(\lambda,A)\in B(X,Y)$ for some $\lambda \in \rho(A)$, where $B(X,Y)$ denotes the set of bounded linear operators from $X$ into $Y$.
\end{lemma}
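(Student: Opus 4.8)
The plan is to prove the two implications separately, using the resolvent identity $AR(\lambda,A) = \lambda R(\lambda,A) - I$ as the main algebraic tool, together with the closed graph theorem for one direction.

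First I would establish the forward implication. Suppose $B$ is $A$-bounded, so that $\|Bf\|_Y \leq a\|Af\|_X + b\|f\|_X$ for all $f \in D(A)$, and fix any $\lambda \in \rho(A)$. Given $g \in X$, set $f = R(\lambda,A)g \in D(A)$; then $Af = \lambda R(\lambda,A)g - g$, so substituting into the relative bound yields
\[
\|BR(\lambda,A)g\|_Y \leq a\|\lambda R(\lambda,A)g - g\|_X + b\|R(\lambda,A)g\|_X \leq \bigl(a|\lambda|\,\|R(\lambda,A)\| + a + b\|R(\lambda,A)\|\bigr)\|g\|_X.
\]
Since $R(\lambda,A) \in B(X)$, the bracketed quantity is a finite constant, and as $g \in X$ was arbitrary this shows $BR(\lambda,A) \in B(X,Y)$.

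For the converse, suppose $BR(\lambda,A) \in B(X,Y)$ for some $\lambda \in \rho(A)$. Given any $f \in D(A)$, write $g = (\lambda I - A)f \in X$, so that $f = R(\lambda,A)g$ and hence $Bf = BR(\lambda,A)g = BR(\lambda,A)(\lambda I - A)f$. Taking norms,
\[
\|Bf\|_Y \leq \|BR(\lambda,A)\|\,\|(\lambda I - A)f\|_X \leq \|BR(\lambda,A)\|\bigl(|\lambda|\,\|f\|_X + \|Af\|_X\bigr),
\]
which is exactly a bound of the form \eqref{equation20909} with $a = b = \|BR(\lambda,A)\|\,\max\{1,|\lambda|\}$ (or, more sharply, $a = \|BR(\lambda,A)\|$ and $b = |\lambda|\,\|BR(\lambda,A)\|$). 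Thus $B$ is $A$-bounded.

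I do not expect any serious obstacle here: both directions are short manipulations with the resolvent, and the only subtlety is bookkeeping the constants so that the inequality genuinely has the shape of Definition~\ref{definition237}. The one point worth a remark is that $BR(\lambda,A)$, being everywhere defined on $X$ (because $\operatorname{ran} R(\lambda,A) = D(A) \subseteq D(B)$), is automatically closed whenever $B$ is closed, so in the closed-$B$ case boundedness would also follow from the closed graph theorem; but since we are handed the stronger hypothesis $BR(\lambda,A) \in B(X,Y)$ directly, the elementary computation above suffices and no closedness assumption on $B$ is needed.
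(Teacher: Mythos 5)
Your argument is correct: both directions are the standard resolvent manipulations (using $AR(\lambda,A)=\lambda R(\lambda,A)-I$ for the forward implication and $Bf=BR(\lambda,A)(\lambda I-A)f$ for the converse), and the constants you extract do give a bound of the form \eqref{equation20909}. The paper itself gives no proof, citing \cite[Lemma 4.1]{banasiak06Pert} instead, and your computation is essentially the proof found there, so there is nothing to add.
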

\begin{proof}See \cite[Lemma 4.1]{banasiak06Pert}.
\end{proof}
\noindent Having defined the concept of the relative boundedness of operators, we are able to detail the conditions which are sufficient to guarantee that our operator matrix generates a semigroup on the associated product space.
\begin{theorem} \label{theorem290} Let $X$ and $Y$ be Banach spaces. Consider the operator matrix
\[\textit{\textbf{A}}\hspace{-.5mm}=\hspace{-.5mm}\left(\hspace{-1mm}\begin{array}{cc}A & B \\0 & D \\ \end{array}\hspace{-1mm}\right),\]
and suppose that the following hold for the linear operators $A$, $B$ and $D$:
\begin{enumerate}
  \item $A:D(A)\subseteq X\rightarrow X$ generates a $C_{0}$-semigroup $(T(t))_{t\geq0}$ on $X$;
  \item $D:D(D)\subseteq Y\rightarrow Y$ generates a $C_{0}$-semigroup $(S(t))_{t\geq0}$ on $Y$;
  \item $B:D(B)\subseteq Y\rightarrow X$ is relatively $D$-bounded;
  \item $(\textit{\textbf{A}},D(\textit{\textbf{A}}))$ is a closed operator;
  \item the operator $\tilde{R}(t):D(D)\subseteq Y\rightarrow X$ given by $\tilde{R}(t)f=\int_{0}^{t}T(t-s)BS(s)f \dd s$, has a unique
        extension $R(t)\in B(Y,X)$ which is uniformly bounded as $t\searrow0$.
\end{enumerate}
Then \textit{\textbf{A}}, with domain $D(\textit{\textbf{A}})=D(A)\times D(D)\subseteq X\times Y$, generates a strongly continuous semigroup $\left(\textit{\textbf{T}}(t)\right)_{t\geq0}$ on the product space
$X\times Y$. Moreover, this semigroup is given by
\[\textit{\textbf{T}}(t):=\left(\hspace{-1mm}\begin{array}{cc}T(t) & R(t) \\0 & S(t) \\\end{array}\hspace{-1mm}\right), \hspace{2mm}t\geq0.\]
\end{theorem}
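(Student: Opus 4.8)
The plan is to verify that the candidate semigroup $\textit{\textbf{T}}(t)$ displayed in the statement is indeed a strongly continuous semigroup on $X \times Y$, and then to identify its generator with $\textit{\textbf{A}}$. First I would check the semigroup law $\textit{\textbf{T}}(t+s) = \textit{\textbf{T}}(t)\textit{\textbf{T}}(s)$ by multiplying the two operator matrices blockwise. The diagonal entries reduce immediately to the semigroup laws for $(T(t))_{t\geq0}$ and $(S(t))_{t\geq0}$ from hypotheses (i) and (ii). The genuinely substantive identity is the off-diagonal relation $R(t+s) = T(t)R(s) + R(t)S(s)$, which should be established first for $f \in D(D)$ using the integral formula $\tilde R(t)f = \int_0^t T(t-r)BS(r)f\dd r$ together with a change of variables and the semigroup properties of $T$ and $S$, and then extended to all of $Y$ by density of $D(D)$ and the fact (hypothesis (v)) that $R(t)$ is the bounded extension of $\tilde R(t)$. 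Strong continuity at $t = 0$ then follows from the strong continuity of $T$ and $S$ together with the uniform boundedness of $R(t)$ as $t \searrow 0$ (again hypothesis (v)), which gives $R(t)f \to 0$ as $t \searrow 0$: indeed for $f \in D(D)$ one has $\|\tilde R(t)f\|_X \leq \int_0^t \|T(t-r)\|\,\|BS(r)f\|_X \dd r \to 0$, and the general case follows by the standard $3\varepsilon$ density argument using the uniform bound.

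Having shown $(\textit{\textbf{T}}(t))_{t\geq0}$ is a $C_0$-semigroup, let $(\textit{\textbf{C}}, D(\textit{\textbf{C}}))$ denote its generator; the task is to prove $\textit{\textbf{C}} = \textit{\textbf{A}}$ with $D(\textit{\textbf{A}}) = D(A) \times D(D)$. I would first show $\textit{\textbf{A}} \subseteq \textit{\textbf{C}}$: for $(f,g)^{\mathsf T} \in D(A) \times D(D)$, compute $\tfrac1t(\textit{\textbf{T}}(t) - I)(f,g)^{\mathsf T}$ componentwise and let $t \searrow 0$. The first component is $\tfrac1t(T(t) - I)f + \tfrac1t R(t)g$; the first summand converges to $Af$, so it remains to show $\tfrac1t R(t)g \to Bg$ for $g \in D(D)$. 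This is the second key computation: writing $\tfrac1t\tilde R(t)g = \tfrac1t\int_0^t T(t-r)BS(r)g\dd r$ and noting that $r \mapsto T(t-r)BS(r)g$ is continuous on $[0,t]$ (using relative $D$-boundedness, hypothesis (iii), to control $BS(r)g$, since $S(r)g \in D(D)$ and $\|S(r)g\|_Y + \|DS(r)g\|_Y$ is bounded on compact $r$-intervals), the fundamental theorem of calculus for the average of a continuous function gives the limit $T(0)BS(0)g = Bg$. The second component trivially gives $Dg$. Hence $\textit{\textbf{A}}(f,g)^{\mathsf T} = \textit{\textbf{C}}(f,g)^{\mathsf T}$ on $D(A) \times D(D)$, i.e. $\textit{\textbf{A}} \subseteq \textit{\textbf{C}}$.

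For the reverse inclusion $\textit{\textbf{C}} \subseteq \textit{\textbf{A}}$, the standard trick is to use that $\textit{\textbf{A}}$, being a restriction of the generator $\textit{\textbf{C}}$, is closable, and that it suffices to find a single $\lambda$ for which $\lambda - \textit{\textbf{A}}$ is surjective onto $X \times Y$; then since $\lambda - \textit{\textbf{C}}$ is injective and extends $\lambda - \textit{\textbf{A}}$, the two operators coincide. Choosing $\lambda$ in $\rho(A) \cap \rho(D)$ (nonempty since both $A$ and $D$ are generators, so large real $\lambda$ works), one solves $(\lambda - \textit{\textbf{A}})(f,g)^{\mathsf T} = (p,q)^{\mathsf T}$ explicitly: set $g = R(\lambda,D)q \in D(D)$, then $g \in D(D) \subseteq D(B)$ so $Bg$ makes sense, and set $f = R(\lambda,A)(p + Bg) \in D(A)$; one checks this $(f,g)^{\mathsf T}$ lies in $D(A) \times D(D)$ and solves the equation. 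This establishes surjectivity of $\lambda - \textit{\textbf{A}}$, hence $\textit{\textbf{A}} = \textit{\textbf{C}}$ is the generator. Hypothesis (iv), closedness of $\textit{\textbf{A}}$, is what makes this clean — alternatively it is automatic once $\textit{\textbf{A}}$ is shown to be a generator, so the verification is mainly a consistency check.

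The main obstacle I anticipate is the careful handling of the off-diagonal term $R(t)$: it is defined only as the bounded extension of the integral operator $\tilde R(t)$ originally given on $D(D)$, so every identity involving $R(t)$ (the semigroup cocycle relation, the strong-continuity estimate, and especially the differentiation $\tfrac1t R(t)g \to Bg$) must first be proved on the core $D(D)$ and then extended by density, with hypothesis (v) supplying the uniform bound that legitimises the limiting arguments. The relative $D$-boundedness in hypothesis (iii), via Lemma~\ref{lemma231}, is exactly what guarantees $BS(r)g$ is well-defined and locally bounded in $r$, which is needed both for the integral $\tilde R(t)g$ to make sense pointwise and for the continuity of the integrand that drives the generator computation; so the interplay of (iii) and (v) is the technical heart of the argument. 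I would cite \cite{engel00} or \cite{banasiak06Pert} for the abstract triangular-operator-matrix framework and present the verification of (i)--(v) as the reusable content.
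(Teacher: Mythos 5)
Your proposal is essentially correct, but note that the paper does not prove this statement at all: its ``proof'' is a citation to Nagel's result on triangular operator matrices (\cite[Proposition 3.1]{nagel89}), so what you have written is a reconstruction of the argument behind that citation rather than an alternative to anything in the paper. Your route --- verify directly that the displayed matrix is a $C_0$-semigroup (the only nontrivial point being the cocycle identity $R(t+s)=T(t)R(s)+R(t)S(s)$, proved on $D(D)$ and extended by density using hypothesis (v)), then identify the generator by showing $\textit{\textbf{A}}\subseteq\textit{\textbf{C}}$ via $\tfrac1t\tilde R(t)g\to Bg$ on $D(D)$ and closing the gap by surjectivity of $\lambda-\textit{\textbf{A}}$ against injectivity of $\lambda-\textit{\textbf{C}}$ --- is the standard one and is sound. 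Two small points of polish: for the limit $\tfrac1t\tilde R(t)g\to Bg$ you need continuity (not just local boundedness) of $r\mapsto BS(r)g$ at $r=0$, which is cleanest via Lemma~\ref{lemma231}: write $BS(r)g=BR(\lambda,D)\,S(r)(\lambda-D)g$ with $BR(\lambda,D)$ bounded; and in the final step you should take $\lambda$ larger than the growth bound of the product semigroup as well as of $T$ and $S$, so that $\lambda-\textit{\textbf{C}}$ is genuinely injective. Your observation that hypothesis (iv) is then automatic rather than needed is consistent with this route; in Nagel's formulation closedness of $\textit{\textbf{A}}$ plays a role in his own argument, but your resolvent-surjectivity argument bypasses it, which is a legitimate (and mildly streamlined) way to obtain the same conclusion.
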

\begin{proof}See \cite[Proposition 3.1]{nagel89}.
\end{proof}
\noindent Having covered the requisite results from the theory of operator semigroups we are now in a position to commence the analysis of our model.

\section{Continuous Fragmentation Regime}
\noindent Looking initially at equation~\eqref{equation301}, we shall conduct our analysis of this equation within the setting of the weighted Lebesgue space $X_C=L_{1}\left((N,\infty),x\dd x\right)$. This is an obvious choice of space in which to study the problem, as the norm $\left\|\cdot\right\|_{X_C}$, when applied to the particle mass density $u_C$, provides a measure of mass. From the terms of equation~\eqref{equation301}, we introduce the following expressions
\begin{equation*}
(\mathcal{A}f)(x)=-a(x)f(x)\hspace{2mm} \text{and}\hspace{2mm} (\mathcal{B}f)(x)=\int_{x}^{\infty}a(y)b(x|y)f(y)\dd y\hspace{2mm} \text{for} \hspace{2mm}x>N.
\end{equation*}
\noindent From these expressions we form the  operators $A_C$ and $B_C$ as follows:
\[(A_Cf)(x)=(\mathcal{A}f)(x), \hspace{6mm}D(A_C)=\left\lbrace f\in X_C:A_Cf \in X_C\right\rbrace,\]
\[(B_Cf)(x)=(\mathcal{B}f)(x), \hspace{6mm}D(B_C)=\left\lbrace f\in X_C:B_Cf \in X_C\right\rbrace.\]

\noindent The following result relates the given domains of these operators, allowing us to consider taking their
sum $A_C+B_C$.

\begin{lemma} \label{lemma301}$D(A_C)\subseteq D(B_C)$ as $\left\|B_Cu\right\|_{X_C}\leq\left\|A_Cu\right\|_{X_C}$ for $u\in D(A_C)$. Hence $\left(A_C+B_C,D(A_C)\right)$ is a well-defined operator.
\end{lemma}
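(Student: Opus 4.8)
The plan is to establish the norm inequality $\|B_C u\|_{X_C} \le \|A_C u\|_{X_C}$ directly by computing the $X_C$-norm of $B_C u$ using Tonelli's theorem, and then deduce the domain inclusion as an immediate consequence. The key structural fact that makes this work is the mass-conservation condition~\eqref{equation303}, which controls the total mass of the fragments of a parent particle that remain in the continuous regime; specifically, for $y>N$ one has $\int_N^y x\, b(x|y)\dd x \le y$, since the discrete-regime contribution $\sum_{j=1}^N j b_j(y)$ is nonnegative.

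First I would take $u \in D(A_C)$, so that $a(\cdot)u(\cdot) \in X_C$, and write out
\[
\|B_C u\|_{X_C} = \int_N^\infty x \left| \int_x^\infty a(y) b(x|y) u(y) \dd y \right| \dd x
\le \int_N^\infty \int_x^\infty x\, a(y) b(x|y) |u(y)| \dd y \dd x,
\]
using that $a$, $b$ are nonnegative. Next I would apply Tonelli's theorem to swap the order of integration; the region $\{(x,y): N<x<\infty,\; x<y<\infty\}$ equals $\{(x,y): N<y<\infty,\; N<x<y\}$ (using $b(x|y)=0$ for $x>y$), giving
\[
\|B_C u\|_{X_C} \le \int_N^\infty a(y) |u(y)| \left( \int_N^y x\, b(x|y) \dd x \right) \dd y.
\]
Then I would invoke~\eqref{equation303} in the form $\int_N^y x\, b(x|y)\dd x \le y$ to bound the inner integral, obtaining
\[
\|B_C u\|_{X_C} \le \int_N^\infty y\, a(y) |u(y)| \dd y = \|A_C u\|_{X_C} < \infty.
\]
Finiteness of the right-hand side shows $B_C u \in X_C$, hence $u \in D(B_C)$, establishing $D(A_C) \subseteq D(B_C)$; and since both $A_C$ and $B_C$ are defined on all of $D(A_C)$, the sum $(A_C + B_C, D(A_C))$ is a well-defined operator.

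I do not anticipate a serious obstacle here: the only point requiring a little care is the justification of the interchange of integrals, but since the integrand is nonnegative and measurable, Tonelli's theorem applies unconditionally, so no prior integrability hypothesis is needed. The inequality~\eqref{equation303} is exactly the ingredient that converts the double integral into the weighted loss-term norm, and the argument would fail (or only yield relative boundedness with a larger constant) without it.
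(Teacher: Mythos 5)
Your proposal is correct and follows essentially the same route as the paper's own proof: bound $\|B_C u\|_{X_C}$ by the nonnegative double integral, interchange the order of integration, and apply the mass-conservation condition~\eqref{equation303} to obtain $\int_N^y x\,b(x|y)\dd x\le y$ and hence the bound by $\|A_C u\|_{X_C}$. Your explicit appeal to Tonelli's theorem is a minor (and welcome) extra justification of the interchange, which the paper leaves implicit.
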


\begin{proof}Let $f\in D(A_C)$. Then
\begin{align}\label{equation305}
\left\|B_Cf\right\|_{X_C}&=\int_{N}^{\infty}\left|\int_{x}^{\infty}a(y)b(x|y)f(y)\dd y\right|x\dd x\nonumber\\
&\leq\int_{N}^{\infty}\left(\int_{x}^{\infty}a(y)b(x|y)\left|f(y)\right|\dd y\right)x\dd x\nonumber\\
&=\int_{N}^{\infty}a(y)\left|f(y)\right|\left(\int_{N}^{y}xb(x|y)\dd x\right)\dd y\\
&\leq \int_{N}^{\infty}a(y)\left|f(y)\right|y\dd y=\left\|A_Cf\right\|_{X_C}.\nonumber
\end{align}\\
\noindent Hence we have $f\in D(B_C)$, and so $D(A_C)\subseteq D(B_C)$. The final inequality follows as $\int_{N}^{y}xb(x|y)\dd x\leq y$, due to the mass conservation condition~\eqref{equation303}. This reflects the fact that upon fragmentation of a particle of mass $y>N$, the total mass of the resulting particles remaining within the continuous regime cannot exceed $y$.
\end{proof}
\noindent This allows us to form the operator $A_C+B_C$ with domain $D(A_C)$. Equation~\eqref{equation301} is then reformulated in the setting of $X_C$ as the abstract Cauchy problem:
\vspace{1mm}
\begin{equation}\label{equation306}
\frac{\ddn}{\ddn t}u_C(t)=K[u_C(t)],\hspace{3mm}t>0;\hspace{3mm}u_C(0)=c_{0}\in D(K),
\end{equation}
\noindent where $K$ is some extension of the operator $A_C+B_C$. The Kato--Voigt perturbation theorem (Theorem~\ref{theorem288}) will allow us to prove the existence of such an operator $K$, which generates a semigroup.

\begin{theorem} \label{theorem301}There exists an extension $(K,D(K))$ of $(A_C+B_C,D(A_C))$, which generates a substochastic semigroup $\left(G_K(t)\right)_{t\geq0}$.
\end{theorem}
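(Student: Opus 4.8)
The plan is to verify the three hypotheses of the Kato--Voigt perturbation theorem (Theorem~\ref{theorem288}) for the pair $(A_C,B_C)$ on the space $X_C=L_1((N,\infty),x\dd x)$, and then simply invoke that theorem to produce the extension $K$ and the substochastic semigroup $(G_K(t))_{t\geq0}$. The positivity of $B_C$ is immediate from its integral formula together with the nonnegativity of $a$ and $b$, and the domain inclusion $D(A_C)\subseteq D(B_C)$ has already been established in Lemma~\ref{lemma301}; so conditions $(ii)$ of Theorem~\ref{theorem288} is essentially free.

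First I would verify condition $(i)$: that $(A_C,D(A_C))$ generates a substochastic semigroup. Since $A_C$ is a multiplication operator by the nonpositive function $-a(x)$, the natural candidate is $(G_{A_C}(t)f)(x)=\mathrm{e}^{-a(x)t}f(x)$. One checks that this is a $C_0$-semigroup on $X_C$ (strong continuity follows from dominated convergence, using $|\mathrm{e}^{-a(x)t}f(x)-f(x)|\le 2|f(x)|$ and $x\dd x$-integrability of $f$), that $\|G_{A_C}(t)\|\le 1$ because $0\le \mathrm{e}^{-a(x)t}\le 1$, and that it preserves the positive cone. Its generator is exactly $(A_C,D(A_C))$ with the maximal domain $\{f\in X_C: af\in X_C\}$ as specified. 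This is standard for multiplication semigroups, so I would state it with a brief justification rather than a full proof.

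Next, condition $(iii)$: for all $f\in D(A_C)_+$ I need $\int_N^\infty (A_Cf+B_Cf)(x)\,x\dd x\le 0$. For nonnegative $f$ the chain of (in)equalities in the proof of Lemma~\ref{lemma301} becomes an equality at the first step, so $\int_N^\infty (B_Cf)(x)\,x\dd x=\int_N^\infty a(y)f(y)\big(\int_N^y x b(x|y)\dd x\big)\dd y$, and using the mass-conservation identity~\eqref{equation303} in the form $\int_N^y x b(x|y)\dd x = y - \sum_{j=1}^N j b_j(y)\le y$, I get $\int_N^\infty (B_Cf)(x)\,x\dd x \le \int_N^\infty a(y) f(y)\, y\dd y = -\int_N^\infty (A_Cf)(x)\,x\dd x$. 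Hence the sum is $\le 0$, with the ``deficit'' equal to $c(f):=\int_N^\infty a(y)f(y)\sum_{j=1}^N j b_j(y)\dd y\ge 0$, which is the nonnegative functional of Remark~\ref{remark283} (this is exactly the mass that leaks into the discrete regime, and will be recovered when the two regimes are coupled).

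With all three conditions of Theorem~\ref{theorem288} in hand, the conclusion is immediate: there exists an extension $(K,D(K))$ of $(A_C+B_C,D(A_C))$ generating a substochastic semigroup $(G_K(t))_{t\geq0}$. I expect the only mildly delicate point to be the careful identification of the generator of the multiplication semigroup with the maximal-domain operator $A_C$ (and checking strong continuity), together with being precise about the application of Fubini's theorem when swapping the order of integration in condition $(iii)$ — both are routine but worth stating cleanly. Everything else is a direct citation of the quoted theorem.
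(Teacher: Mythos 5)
Your proposal is correct and follows essentially the same route as the paper: verifying conditions (i)--(iii) of Theorem~\ref{theorem288} for $(A_C,B_C)$, with the multiplication semigroup $\mathrm{e}^{-a(x)t}$ for condition (i), Lemma~\ref{lemma301} plus positivity for (ii), and the Fubini computation with the mass-conservation condition~\eqref{equation303} for (iii). Your expression for the deficit functional $c(f)$ agrees with the paper's via~\eqref{equation303}, so there is nothing to add.
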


\begin{proof}To establish this result we show that the three conditions set out in Theorem~\ref{theorem288} are satisfied for our particular operators $A_C$ and $B_C$.

\begin{enumerate}
  \item It is clear that $\left(A_C,D(A_C)\right)$ generates a substochastic semigroup\\ $\left(G_{A_C}(t)\right)_{t\geq0}$ on $X_C$, where $\left(G_{A_C}(t)f\right)(x)=\exp(-a(x)t)f(x)$, for $f\in X_C$.
  \item We have shown in Lemma~\ref{lemma301} that $D(A_C)\subseteq D(B_C)$. The nonnegativity of $a$ and $b$ imply that $B_C$ a positive operator, so that $B_Cf\in X_{C+}$ for all $f\in D(B_C)_+$.
  \item For all $f\in D(A_C)_+$ we have that
  \begin{align*}
   \hspace{-4mm}&\int_{N}^{\infty}\hspace{-1.5mm}\left(A_Cf+B_Cf\right)x\dd x=\hspace{-.5mm}\int_{N}^{\infty}\hspace{-1.5mm}
   \left(-a(x)f(x)+\int_{x}^{\infty}\hspace{-1.5mm}a(y)b(x|y)f(y)\dd y\right)x\dd x\\
   &=-\int_{N}^{\infty}a(x)f(x)x\dd x+\int_{N}^{\infty}\left(\int_{x}^{\infty}a(y)b(x|y)f(y)\dd y\right)x\dd x\\
   &=-\int_{N}^{\infty}a(x)f(x)x\dd x+\int_{N}^{\infty}a(y)f(y)\left(\int_{N}^{y}xb(x|y)\dd x\right)\dd y\\
   &=-\int_{N}^{\infty}\left(x-\int_{N}^{x}yb(y|x)\dd y\right)a(x)f(x)\dd x=:-c(f)\leq0.\\
  \end{align*}
\end{enumerate}
\vspace{-4mm}
We have introduced the notation $c$ to represent the final integral expression, and this functional will have significance in the analysis which follows. The nonnegativity of $c$ comes as a result of the earlier statement regarding $\int_{N}^{x}yb(y|x)\dd y\leq x$. The conditions of Theorem~\ref{theorem288} have been shown to hold in our case; hence there exists an extension $(K,D(K))$ of $(A_C+B_C,D(A_C))$, which generates a substochastic semigroup $\left(G_K(t)\right)_{t\geq0}$.
\end{proof}

\noindent This theorem proves only the existence of a generating extension $K$, and offers no indication of how exactly $K$ relates to $A_C+B_C$. The nature of the generator $K$ is closely related to the concept of the honesty of the semigroup (Definition~\ref{definition286}) and in turn the occurrence of `shattering'. This relationship is discussed in \cite{banasiak06PhyD}, where a range of possibilities for $K$ are considered and it is shown that the cases in which shattering occurs coincide with those in which the semigroup generated by $K$ is dishonest. \\

\noindent In order to establish the honesty of the semigroup $\left(G_K(t)\right)_{t\geq0}$, we follow a similar approach to that taken in \cite[Section 6.3]{banasiak06Pert}. Let us denote by $\mathsf{E}$ the set of all measurable functions defined on $(N,\infty)$, which take values within the extended reals. By $\mathsf{E}_f$ we denote the subspace of $\mathsf{E}$ consisting of functions which are finite almost everywhere. We also introduce the set $\mathsf{F}\subset\mathsf{E}$, defined as follows. The function $f\in\mathsf{F}$, if and only if given a nonnegative, nondecreasing sequence $\left\lbrace f_n\right\rbrace_{n=1}^\infty \subset\mathsf{E}$, where $\sup_{n\in\mathbb{N}}f_n=|f|$, we have $\sup_{n\in\mathbb{N}}\left(I-A_C\right)^{-1}f_n\in X_C$.\\

\noindent Additionally, we place the following two requirements on the operator $B_C$ and its domain $D(B_C)$. Firstly
\begin{equation}\label{equation307}
f\in D(B_C)\hspace{2.5mm} \text{if and only if}\hspace{2.5mm}f_+,f_-\in D(B_C),
\end{equation}
\\
\noindent where $f_{+}=\max\left\lbrace f,0\right\rbrace$ and $f_{-}=-\min\left\lbrace f,0\right\rbrace$. Secondly, for any two nondecreasing sequences $\left\lbrace f_n\right\rbrace_{n=1}^\infty$ and $\left\lbrace g_n\right\rbrace_{n=1}^\infty$ in $D(B_C)_+$, we have that

\begin{equation}\label{equation308}
\sup_{n\in \mathbb{N}}f_n=\sup_{n\in \mathbb{N}}g_n\hspace{2.5mm} \text{implies}\hspace{2.5mm}\sup_{n\in \mathbb{N}}B_Cf_n=\sup_{n\in \mathbb{N}}B_Cg_n.
\end{equation}

\begin{lemma} \label{lemma302} With $B_C$ restricted to $D(A_C)$, $(B_C,D(A_C))$ satisfies the conditions \eqref{equation307} and \eqref{equation308}.
\end{lemma}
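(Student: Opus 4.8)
The plan is to verify conditions \eqref{equation307} and \eqref{equation308} separately, each reducing to an elementary observation about the explicit forms of $A_C$ and $B_C$ together with standard measure-theoretic facts.

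For \eqref{equation307}, I would use that $f\in D(A_C)$ precisely when $f\in X_C$ and $\int_N^\infty a(x)|f(x)|x\dd x<\infty$. Since $0\le f_\pm\le|f|$ pointwise and $a$ is nonnegative, $f\in D(A_C)$ immediately forces $f_\pm\in X_C$ and $\int_N^\infty a(x)f_\pm(x)x\dd x\le\int_N^\infty a(x)|f(x)|x\dd x<\infty$, so $f_\pm\in D(A_C)$. For the converse, using $|f|=f_++f_-$, one has $f=f_+-f_-\in X_C$ and $\int_N^\infty a(x)|f(x)|x\dd x=\int_N^\infty a(x)f_+(x)x\dd x+\int_N^\infty a(x)f_-(x)x\dd x<\infty$ whenever $f_+,f_-\in D(A_C)$, giving $f\in D(A_C)$.

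For \eqref{equation308}, the crucial ingredients are the positivity of the integral operator $B_C$ (a consequence of the nonnegativity of $a$ and $b$) and the monotone convergence theorem. Given a nondecreasing sequence $\{f_n\}\subset D(A_C)_+$, positivity of $B_C$ makes $\{B_Cf_n\}$ nondecreasing, and for each fixed $x>N$ the monotone convergence theorem permits the interchange
\[
\sup_{n\in\mathbb{N}}(B_Cf_n)(x)=\sup_{n\in\mathbb{N}}\int_x^\infty a(y)b(x|y)f_n(y)\dd y=\int_x^\infty a(y)b(x|y)\Bigl(\sup_{n\in\mathbb{N}}f_n\Bigr)(y)\dd y,
\]
where the suprema are taken pointwise in $\mathsf{E}$. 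Hence $\sup_n B_Cf_n$, viewed as an element of $\mathsf{E}$, is determined entirely by the pointwise supremum $\sup_n f_n$; applying the same identity to a second nondecreasing sequence $\{g_n\}\subset D(A_C)_+$ with $\sup_n g_n=\sup_n f_n$ shows the two right-hand sides agree, so $\sup_n B_Cf_n=\sup_n B_Cg_n$.

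I do not expect a genuine obstacle; the only point demanding care is the measure-theoretic bookkeeping — reading the suprema pointwise in the extended reals so that no integrability of $\sup_n f_n$ is presupposed, noting that a countable pointwise supremum of measurable functions is measurable, and invoking the monotone convergence theorem for each fixed $x$ with the nonnegative kernel $(x,y)\mapsto a(y)b(x|y)$ on $\{N<x<y\}$.
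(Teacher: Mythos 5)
Your proposal is correct and follows essentially the same route as the paper: condition \eqref{equation307} is verified via the pointwise bounds $0\le f_\pm\le|f|$ in one direction and the splitting $f=f_+-f_-$ (the paper phrases this with the triangle inequality for $\|A_C\cdot\|_{X_C}$, you with the identity $|f|=f_++f_-$) in the other, and condition \eqref{equation308} follows from the nonnegativity of the kernel together with Lebesgue's monotone convergence theorem, giving $\sup_n B_Cf_n=\mathcal{B}\sup_n f_n=\mathcal{B}\sup_n g_n=\sup_n B_Cg_n$ exactly as in the paper.
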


\begin{proof}Initially let us assume that both $f_+,f_-\in D(A_C)$. Then, writing $f$ as $f=f_+-f_-$ and using the
linearity of $A$ with the triangle inequality, we get that
\[\left\|A_Cf\right\|_{X_C}=\left\|A_Cf_+-A_Cf_-\right\|_{X_C}\leq\left\|A_Cf_+\right\|_{X_C}+\left\|A_Cf_-\right\|_{X_C}.\]
Therefore $f\in D(A_C)$ when $f_+,f_-\in D(A_C)$. Now conversely, suppose that $f\in D(A_C)$. Since $0\leq f_\pm \leq|f|$, we have
\[\left\|A_Cf_\pm\right\|_{X_C}=\int_{N}^{\infty}a(y)f_\pm(y)y\dd y\leq \int_{N}^{\infty}a(y)|f(y)|y\dd y=\left\|A_Cf\right\|_{X_C}.\]
Hence if $f\in D(A_C)$ then $f_+,f_-\in D(A_C)$. Taken together, these two results give us the first of our conditions \eqref{equation307}. The second condition, \eqref{equation308}, follows using Lebesgue's monotone convergence theorem, which gives us
\[\sup_{n\in \mathbb{N}}B_Cf_n=\mathcal{B}\sup_{n\in \mathbb{N}}f_n=\mathcal{B}\sup_{n\in \mathbb{N}}g_n=\sup_{n\in \mathbb{N}}B_Cg_n.\]
\noindent Therefore the operator $B_C$ satisfies both of our requirements when it is restricted to the domain $D(A_C)$, which from now on we shall assume unless otherwise stated.
\end{proof}
\noindent We are nearly in a position to demonstrate the honesty of the semigroup $\left(G_K(t)\right)_{t\geq0}$. However, before we can do so we are required to introduce some further notation and detail a result we had been holding off since the previous section.\\

\noindent In addition to the above defined sets, we also introduce $\mathsf{G}\subset\mathsf{E}$ as the set of all functions $f\in X_C$ such that if $\left\lbrace f_n\right\rbrace_{n=1}^\infty$ is a nondecreasing sequence of nonnegative functions in $D(A_C)$ such that $\sup_{n\in \mathbb{N}}f_n=|f|$, then $\sup_{n\in \mathbb{N}}B_Cf_n<\infty$ almost everywhere.\\

\noindent The final items of notation which we must introduce are the mappings $\mathsf{B}:D(\mathsf{B})_{+}\rightarrow\mathsf{E}_{f,+}$, where $D(\mathsf{B})=\mathsf{G}$ and $\mathsf{L}:\mathsf{F}_{+}\rightarrow X_{C+}$ defined by
\begin{align*}
&\mathsf{B}f:=\sup_{n\in \mathbb{N}}B_Cf_n, \hspace{10mm}f\in D(\mathsf{B})_{+},\\
&\mathsf{L}f:=\sup_{n\in \mathbb{N}}R(1,A_C)f_n, \hspace{7mm}f\in \mathsf{F}_{+},
\end{align*}
where $0\leq f_n\leq f_{n+1}$ for all $n\in \mathbb{N}$ and $\sup_{n\in \mathbb{N}}f_n=f$.\\

\noindent With the set notations and extension operators defined, we can now detail the key generator characterisation result, which will enable us to establish the honesty of our semigroup.
\begin{theorem} \label{theorem302} If for all $f\in\mathsf{F}_+$ such that $-f+\mathsf{BL}f\in X_C$ and $c(\mathsf{L}f)$ exists it is true that
\begin{equation}\label{equation309}
\int_{N}^{\infty}\mathsf{L}fx\dd x+\int_{N}^{\infty}\left(-f+\mathsf{BL}f\right)x\dd x\geq-c\left(\mathsf{L}f\right),
\end{equation}
then $K=\overline{A_C+B_C}$.
\end{theorem}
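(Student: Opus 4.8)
The plan is to invoke Theorem~\ref{theorem289}, so it suffices to show that the semigroup $\left(G_K(t)\right)_{t\geq0}$ is honest; equivalently, by the general theory developed in \cite[Section 6.3]{banasiak06Pert}, it suffices to verify the characterisation condition \eqref{equation309}. The underlying idea is that the resolvent $R(1,A_C)$ is an explicit multiplication operator, $(R(1,A_C)f)(x) = f(x)/(1+a(x))$, and that the extension $K$ of $A_C+B_C$ produced by the Kato--Voigt construction acts, on the positive cone, through the extended operators $\mathsf{L}$ and $\mathsf{B}$; in fact one has the representation $D(K)_+ = \{u = \mathsf{L}f : f \in \mathsf{F}_+,\ -f + \mathsf{B}\mathsf{L}f \in X_C\}$ with $Ku = -f + \mathsf{B}\mathsf{L}u$ for such $u$. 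Granting this, the honesty defect — the difference between the formal rate of mass change $-c(u)$ and the actual rate $\frac{\ddn}{\ddn t}\|G_K(t)u\|$ — is controlled precisely by the quantity appearing in \eqref{equation309}. So the task reduces to a bookkeeping argument tying \eqref{equation309} to honesty.

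First I would record the explicit form of $R(1,A_C)$ and check that $\mathsf{L}$ is well-defined on $\mathsf{F}_+$ via monotone convergence, noting the identity $\int_N^\infty (\mathsf{L}f)\,x\dd x = \lim_n \int_N^\infty R(1,A_C)f_n\,x\dd x$ for any admissible approximating sequence $\{f_n\}$ — independence of the sequence following from Lemma~\ref{lemma302} and the Beppo Levi theorem, just as in the proof of \eqref{equation308}. Next I would unpack what condition \eqref{equation309} says: writing $u = \mathsf{L}f \in D(K)_+$, the term $\int_N^\infty (-f + \mathsf{B}\mathsf{L}f)\,x\dd x$ equals $\int_N^\infty (Ku)\,x\dd x$, while $\int_N^\infty (\mathsf{L}f)\,x\dd x - \int_N^\infty f\,x\dd x$ measures the discrepancy between $\|u\|$ and $\|(I-A_C)u\|$ in the cases where $A_C u$ is defined in the extended sense. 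The inequality \eqref{equation309} then states exactly that the functional $c$ extends to $D(K)_+$ with $\int_N^\infty (Ku)\,x\dd x \geq -c(u)$ for all $u \in D(K)_+$; combined with the reverse inequality, which is automatic from the Kato--Voigt construction (the semigroup is substochastic, so mass is non-increasing and in fact $\int_N^\infty Ku\,x\dd x \le -c(u)$ can only fail towards equality), one deduces $\int_N^\infty Ku\,x\dd x = -c(u)$ on $D(K)_+$. By the honesty criterion of \cite[Theorem 6.13 and the surrounding discussion]{banasiak06Pert}, this equality on the generator's domain is equivalent to honesty of $\left(G_K(t)\right)_{t\geq0}$, hence to $K = \overline{A_C+B_C}$ by Theorem~\ref{theorem289}.

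The main obstacle I anticipate is not any single estimate but the careful identification of $D(K)$ and the action of $K$ on it in terms of $\mathsf{L}$ and $\mathsf{B}$ — that is, justifying that the abstract extension handed to us by Theorem~\ref{theorem288} really does coincide with the concrete object built from the monotone-limit operators, and that the various extended integrals ($c(\mathsf{L}f)$, $\int_N^\infty \mathsf{B}\mathsf{L}f\,x\dd x$) are finite and manipulable exactly when the hypotheses of the statement grant it. This is where one must lean most heavily on the machinery of \cite{banasiak06Pert}, invoking the resolvent series for $K$ and the fact that $(I - K)^{-1}$ agrees with the monotone limit of the truncated resolvent operators. Once that correspondence is in place, the verification of \eqref{equation309} as stated is essentially a matter of substituting the explicit expressions for $R(1,A_C)$, $\mathsf{B}$ and $c$ and applying Fubini--Tonelli together with the mass-conservation identity \eqref{equation303}, mirroring the computation in the proof of Theorem~\ref{theorem301}.
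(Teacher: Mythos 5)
The paper does not actually prove this theorem --- it is quoted directly from \cite[Theorem 6.22]{banasiak06Pert} --- and your outline reconstructs essentially that reference's argument (honesty is equivalent to $\int_N^\infty Ku\,x\dd x=-c(u)$ on $D(K)_+$, with $D(K)$ and the action of $K$ expressed through the extensions $\mathsf{L}$ and $\mathsf{B}$ as in Lemma~\ref{lemma3002}, the reverse inequality being automatic and Theorem~\ref{theorem289} converting honesty into $K=\overline{A_C+B_C}$), so this is the same route, resting on the same machinery the paper simply cites. One small correction: with $u=\mathsf{L}f$, Lemma~\ref{lemma3002} gives $Ku=\mathsf{L}f-f+\mathsf{B}\mathsf{L}f$, so it is the \emph{entire} left-hand side of \eqref{equation309}, not the second integral alone, that equals $\int_N^\infty Ku\,x\dd x$; your subsequent reading of the condition as ``$\int_N^\infty Ku\,x\dd x\geq -c(u)$ on the admissible class'' is nonetheless the correct one.
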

\begin{proof}See \cite[Theorem 6.22]{banasiak06Pert}.
\end{proof}

\begin{theorem}\label{theorem303} If the fragmentation rate, $a(x)$, is such that
\[\limsup_{x\rightarrow{N}^+}a(x)<\infty \hspace{2.5mm} \text{and}\hspace{2.5mm}a\in L_{\infty,loc}(N,\infty),\]
\noindent then the semigroup $\left(G_K(t)\right)_{t\geq0}$ is honest.
\end{theorem}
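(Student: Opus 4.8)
The plan is to verify the hypothesis of Theorem~\ref{theorem302}, namely inequality~\eqref{equation309}, under the stated regularity assumptions on $a$, so that $K=\overline{A_C+B_C}$ and hence, by Theorem~\ref{theorem289}, the semigroup $\left(G_K(t)\right)_{t\geq0}$ is honest. Fix $f\in\mathsf{F}_+$ with $-f+\mathsf{B}\mathsf{L}f\in X_C$ and such that $c(\mathsf{L}f)$ exists, and write $g:=\mathsf{L}f$. The first step is to understand $g$ concretely: for a nondecreasing sequence $\left\lbrace f_n\right\rbrace\subset D(A_C)_+$ with $\sup_n f_n=f$, we have $g=\sup_n R(1,A_C)f_n$, and since $\left(R(1,A_C)h\right)(x)=h(x)/(1+a(x))$, monotone convergence gives $g(x)=f(x)/(1+a(x))$ pointwise, so $(I-A_C)g=f$ in the pointwise sense, i.e. $g+a g=f$ a.e. Similarly $\mathsf{B}g=\mathcal{B}g=\sup_n B_C f_n$ by monotone convergence. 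The key point is that the local boundedness assumption $a\in L_{\infty,\mathrm{loc}}(N,\infty)$ together with $\limsup_{x\to N^+}a(x)<\infty$ means $a$ is bounded on every set of the form $(N,R)$; this is what will let us pass to limits and justify the integration-by-parts/Fubini manipulations that are only formal in general.

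The second step is to compute the left-hand side of~\eqref{equation309}. Combining the two integrals and using $g+ag=f$, the integrand becomes
\[
\mathsf{L}f + \bigl(-f+\mathsf{B}\mathsf{L}f\bigr)
= g - (g+ag) + \mathcal{B}g
= -a g + \mathcal{B}g
= (A_C g)(x) + (\mathcal{B}g)(x).
\]
Thus the left-hand side of~\eqref{equation309} is exactly $\int_N^\infty \bigl(A_C g + \mathcal{B}g\bigr)\,x\,\mathrm{d}x$, and the claim reduces to showing
\[
\int_N^\infty \bigl(A_C g + \mathcal{B}g\bigr)(x)\,x\,\mathrm{d}x \;\geq\; -c(g).
\]
In the computation in the proof of Theorem~\ref{theorem301} it was shown that for $h\in D(A_C)_+$ one has $\int_N^\infty (A_Ch+B_Ch)\,x\,\mathrm{d}x = -c(h)$ with equality. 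The obstruction is that here $g$ need not lie in $D(A_C)$: we only know $g\in X_C$ (indeed $g=\mathsf{L}f\in X_{C+}$), so we cannot simply quote that identity. The strategy is therefore an approximation/truncation argument: apply the identity to the truncated functions $g_R := g\,\chi_{(N,R)}$ (or to $g_n$ with an additional mass truncation), which do lie in $D(A_C)$ because $a$ is bounded on $(N,R)$, obtain the exact mass-balance identity there, and then let $R\to\infty$. The gain term only increases under enlarging the domain of integration in $y$, and Fatou's lemma applied to the nonnegative quantities controls the loss of mass in the limit; the surviving inequality is precisely~\eqref{equation309}. The interchange of the order of integration (the Fubini step turning $\int_N^\infty\bigl(\int_x^\infty a(y)b(x|y)g(y)\,\mathrm{d}y\bigr)x\,\mathrm{d}x$ into $\int_N^\infty a(y)g(y)\bigl(\int_N^y x b(x|y)\,\mathrm{d}x\bigr)\mathrm{d}y$) is legitimate throughout since every integrand is nonnegative.

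The main obstacle, and the only place the hypotheses on $a$ are genuinely used, is justifying that the truncations $g_R$ actually belong to $D(A_C)$ and that no mass ``leaks'' in an uncontrolled way as $R\to\infty$; equivalently, that the functional $c$ and the integrals behave continuously under the truncation. With $a$ locally bounded near $N$ and away from $\infty$, $a g_R = a g \chi_{(N,R)}\in X_C$ for each finite $R$, so $g_R\in D(A_C)_+$ and the exact identity of Theorem~\ref{theorem301} applies to $g_R$; letting $R\to\infty$ and invoking monotone convergence on the (nonnegative) gain term and Fatou on the loss term yields~\eqref{equation309}. I would also note that the existence of $c(\mathsf{L}f)$ and the assumption $-f+\mathsf{B}\mathsf{L}f\in X_C$ guarantee each of the integrals appearing in the limiting inequality is finite, so the passage to the limit is not vacuous. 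Once~\eqref{equation309} is established, Theorem~\ref{theorem302} gives $K=\overline{A_C+B_C}$, and Theorem~\ref{theorem289} then immediately yields honesty of $\left(G_K(t)\right)_{t\geq0}$, completing the proof.
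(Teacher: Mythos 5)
Your proposal is correct and takes essentially the same route as the paper: after identifying $\mathsf{L}f=(1+a)^{-1}f$ and $\mathsf{B}=\mathcal{B}$ and reducing \eqref{equation309} to the inequality \eqref{equation310} for $g=\mathsf{L}f$, your truncation $g_R=g\chi_{(N,R)}$ (admissible precisely because the hypotheses make $a$ essentially bounded on $(N,R]$, so $g_R\in D(A_C)_+$) reproduces the paper's bounded-domain Fubini identity, with the discarded gain from particles of mass above $R$ being exactly the paper's nonnegative remainder term. The only refinement I would suggest is to pass to the limit via the finite-$R$ inequality $\int_N^R(-ag+\mathcal{B}g)\,x\dd x\geq -c(g)$, using that $ag$, and hence $\mathcal{B}g=(-ag+\mathcal{B}g)+ag$, lies in $L_1((N,R],x\dd x)$ and that $-ag+\mathcal{B}g\in X_C$, rather than taking monotone limits of the gain and loss terms separately, which is ambiguous when $ag\notin X_C$.
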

\begin{proof} The proof of this result follows closely that of \cite[Theorem 8.5]{banasiak06Pert}. Since $A_Cf=-af$, as in \cite[Corollary 3.1]{banasiak01}, we have that $\mathsf{F}=\left\lbrace f\in\mathsf{E}:(1+a)^{-1}f\in X_C\right\rbrace$ and $\mathsf{L}f=(1+a)^{-1}f$, whilst by Lebesgue's monotone convergence theorem the operator $\mathsf{B}$ is given by the integral expression $\mathcal{B}$. \\

\noindent For $f\in\mathsf{F}_+$, let $g=\mathsf{L}f=(1+a)^{-1}f\in X_{C+}$. Then we see that the condition \eqref{equation309} is satisfied if for all $g\in X_{C+}$ such that $-ag+\mathcal{B}g\in X_C$ and $c(g)$ exists, we have:
\vspace{3mm}
\begin{equation}\label{equation310}
\int_{N}^{\infty}\left(-a(x)g(x)+\left(\mathcal{B}g\right)(x)\right)x\dd x\geq-c\left(g\right).
\end{equation}
\noindent By our assumptions regarding the function $a$, we have $ag\in L_{1}\left((N,R],x\dd x\right)$ for any $N<R<\infty$ with $\mathcal{B}g\in L_{1}\left((N,R],x\dd x\right)$ also, since $-ag+\mathcal{B}g\in X_C$. We may write the left-hand side of \eqref{equation310} as
\begin{align}\label{equation311}
 &\int_{N}^{\infty}\left(-a(x)g(x)+\left(\mathcal{B}g\right)(x)\right)x\dd x=\lim_{R\rightarrow\infty}\int_{N}^{R}\left(-a(x)g(x)+\left(\mathcal{B}g\right)(x)\right)x\dd x\nonumber\\
&=\lim_{R\rightarrow\infty}\left\lbrace -\int_{N}^{R}a(x)g(x)x\dd x+\int_{N}^{R}\left(\int_{x}^{\infty}a(y)b(x|y)g(y)\dd y\right)x\dd x\right\rbrace.
\end{align}
\noindent If we take the second term from above, split the inner integral in two and then change the order of integration for the integral over the bounded domain, then we get
\begin{align*}
&\int_{N}^{R}\left(\int_{x}^{\infty}a(y)b(x|y)g(y)\dd y\right)x \dd x\\
&=\int_{N}^{R}\left(\int_{x}^{R}a(y)b(x|y)g(y)\dd y\right)x \dd x+\int_{N}^{R}\left(\int_{R}^{\infty}a(y)b(x|y)g(y)\dd y\right)x \dd x\\
&=\int_{N}^{R}a(y)g(y)\left(\int_{N}^{y}xb(x|y)\dd x\right)\dd y+\int_{N}^{R}\left(\int_{R}^{\infty}a(y)b(x|y)g(y)\dd y\right)x \dd x.
\end{align*}

\noindent Substituting this back into \eqref{equation311} then yields
\begin{align*}
&\int_{N}^{\infty}\left(-a(x)g(x)+\left(\mathcal{B}g\right)(x)\right)x\dd x\\
&=-\lim_{R\rightarrow\infty}\int_{N}^{R}\left(x-\int_{N}^{x}yb(y|x)\dd y\right)a(x)g(x) \dd x\\
&\hspace{5mm}+\lim_{R\rightarrow\infty}\int_{N}^{R}\left(\int_{R}^{\infty}a(y)b(x|y)g(y)\dd y\right)x \dd x\\
&=-c(g)+\lim_{R\rightarrow\infty}\int_{N}^{R}\left(\int_{R}^{\infty}a(y)b(x|y)g(y)\dd y\right)x \dd x.
\end{align*}
\noindent The nonnegativity of the additional term accompanying $-c(g)$ gives us \eqref{equation310}, and with that the honesty of the semigroup $\left(G_K(t)\right)_{t\geq0}$.
\end{proof}

\noindent Having obtained an $X_C$-valued solution to our abstract Cauchy problem equation \eqref{equation306}, we now examine whether this provides a scalar-valued solution to our original continuous regime equation \eqref{equation301}. But first we require the following result, which we have delayed until now as it concerns the extended operators introduced above.

\begin{lemma}\label{lemma3002}Suppose that $\left(A_C,D(A_C)\right)$ and $\left(B_C,D(B_C)\right)$ satisfy the conditions of Theorem~\ref{theorem288} along with conditions \eqref{equation307} and \eqref{equation308}. Additionally let us define the operator $\mathsf{T}$ with
$D(\mathsf{T})=\mathsf{LF} \subset X_C $ by
\[\mathsf{T}u=u-\mathsf{L}^{-1}u,\]
which is permitted since $\mathsf{L}$ is one-to-one \cite[Theorem 6.18]{banasiak06Pert}. Then, the extension $K$ of $A_C+B_C$ that generates a substochastic semigroup on $X_C$ is given by
\[Ku=\mathsf{T}u+\mathsf{B}u,\]
\noindent with
\[D(K)=\left\lbrace u\in D(\mathsf{T})\cap D(\mathsf{B}):\mathsf{T}u+\mathsf{B}u \in X_C, \text{  and  }\lim_{n\rightarrow\infty}\left\|(\mathsf{L}\mathsf{B})^nu \right\|=0 \right\rbrace.\]
\end{lemma}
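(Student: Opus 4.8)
The plan is to deduce this characterisation of $K$ from the abstract machinery of \cite[Section~6.3]{banasiak06Pert}, by checking that the present operators $A_C$ and $B_C$ fit into that framework and then quoting the relevant representation theorem. The essential point is that the hypotheses of the lemma — that $(A_C,D(A_C))$ and $(B_C,D(B_C))$ satisfy the conditions of Theorem~\ref{theorem288} together with \eqref{equation307} and \eqref{equation308} — are exactly the standing assumptions under which the extended operators $\mathsf{L}$, $\mathsf{B}$ and $\mathsf{T}$ are defined and under which the generator of the Kato--Voigt semigroup admits an explicit description. So the body of the proof will consist of: (i) recalling that $\mathsf{L}:\mathsf{F}_+\to X_{C+}$ is well defined and extends to a positive operator with $\mathsf{L}F=R(1,A_C)$-image, that $\mathsf{L}$ is injective by \cite[Theorem~6.18]{banasiak06Pert} so $\mathsf{L}^{-1}$ makes sense on $\mathsf{LF}$, and hence that $\mathsf{T}u=u-\mathsf{L}^{-1}u$ is a legitimate definition on $D(\mathsf{T})=\mathsf{LF}$; (ii) recalling the construction of the Kato--Voigt generator as the limit of the resolvent series, i.e. $R(1,K)=\sum_{n=0}^{\infty}R(1,A_C)(B_CR(1,A_C))^{n}$ extended to $X_C$, which is precisely what forces the condition $\lim_{n\to\infty}\|(\mathsf{L}\mathsf{B})^{n}u\|=0$ to appear in $D(K)$; and (iii) invoking the representation theorem of \cite{banasiak06Pert} that identifies $K$ on that domain with $\mathsf{T}+\mathsf{B}$.

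Concretely, I would proceed as follows. First, verify the standing hypotheses: conditions (i)--(iii) of Theorem~\ref{theorem288} for $A_C,B_C$ were established in the proof of Theorem~\ref{theorem301}, and conditions \eqref{equation307}, \eqref{equation308} for $B_C$ restricted to $D(A_C)$ were established in Lemma~\ref{lemma302}; so the lemma's hypotheses hold in our application (though the lemma is stated abstractly and this verification is really for the reader's reassurance rather than part of the abstract proof). Second, recall from \cite[Theorem~6.18]{banasiak06Pert} and the surrounding discussion that, under these hypotheses, $\mathsf{L}$ is a positive, injective extension of $R(1,A_C)$ to $\mathsf{F}_+$ (and then to the linear span $\mathsf{F}$ by splitting $f=f_+-f_-$, using \eqref{equation307}), so that $\mathsf{T}u:=u-\mathsf{L}^{-1}u$ is well defined on $D(\mathsf{T})=\mathsf{LF}$ and reduces to $A_C$ on $D(A_C)$ because $\mathsf{L}^{-1}=(I-A_C)$ there. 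Third, identify the resolvent of $K$: by the Kato--Voigt construction (Theorem~\ref{theorem288} and its proof via \cite[Corollary~5.17]{banasiak06Pert}), $R(1,K)$ is the strong limit of the partial sums $\sum_{n=0}^{N}R(1,A_C)(B_CR(1,A_C))^{n}$; rewriting $R(1,A_C)=\mathsf{L}$ and $B_C R(1,A_C)=\mathsf{L}\mathsf{B}$ (valid on the appropriate cone, again by monotone convergence as in Lemma~\ref{lemma302}), this becomes $R(1,K)=\mathsf{L}\sum_{n=0}^{\infty}(\mathsf{B}\mathsf{L})^{n}$, and convergence on the image of $R(1,K)$ is equivalent to $\|(\mathsf{L}\mathsf{B})^{n}u\|\to 0$. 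Fourth, from $u=R(1,K)g$ one computes $(I-K)u=g=\mathsf{L}^{-1}u-\mathsf{B}u$, i.e. $Ku=u-\mathsf{L}^{-1}u+\mathsf{B}u=\mathsf{T}u+\mathsf{B}u$, with $u$ ranging exactly over $\{u\in D(\mathsf{T})\cap D(\mathsf{B}):\mathsf{T}u+\mathsf{B}u\in X_C,\ \lim_{n\to\infty}\|(\mathsf{L}\mathsf{B})^{n}u\|=0\}$, which is the claimed $D(K)$.

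The main obstacle — and the reason I would lean on the cited abstract results rather than re-prove everything — is the careful bookkeeping around the \emph{extended} (non-closed, positive-cone) operators $\mathsf{L}$ and $\mathsf{B}$: showing that the algebraic manipulations $R(1,A_C)\leftrightarrow\mathsf{L}$ and $B_CR(1,A_C)\leftrightarrow\mathsf{L}\mathsf{B}$ remain valid after passing to suprema of monotone sequences (this is where \eqref{equation307}, \eqref{equation308} and Lebesgue's monotone convergence theorem do the real work), that the series $\sum_n(\mathsf{B}\mathsf{L})^n$ can be interchanged with $\mathsf{L}$, and that the resulting domain description is exactly characterised by the stated limit condition. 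Since all of this is precisely the content of \cite[Section~6.3]{banasiak06Pert} — in particular \cite[Theorem~6.18]{banasiak06Pert} and the generator representation theorem there — the cleanest route is to state that our operators satisfy the required hypotheses (by Theorem~\ref{theorem301} and Lemma~\ref{lemma302}) and then cite the representation result directly, which is what I expect the author's proof to do.
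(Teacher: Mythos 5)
Your proposal is correct and matches the paper's approach: the paper's entire proof is a citation to the abstract generator-characterisation result in Banasiak and Arlotti (Theorem 6.20 of \cite{banasiak06Pert}), which is exactly the representation theorem you propose to invoke after noting that the hypotheses were verified in Theorem~\ref{theorem301} and Lemma~\ref{lemma302}. Your additional sketch of the underlying resolvent-series and monotone-convergence machinery is consistent with what that cited theorem encapsulates, but is not reproduced in the paper.
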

\begin{proof}See \cite[Theorem 6.20]{banasiak06Pert}
\end{proof}

\begin{theorem}\label{theorem3003}There exists a measurable scalar-valued representation $u_C(x,t)$ of the semigroup solution $(G_K(t)c_0)(x)$, such that $u_C(x,t)$ is absolutely continuous with respect to $t$, the partial derivative $\partial_t u_C(x,t)$ exists almost everywhere on $(N,\infty)\times[0,\infty)$ and $u_C(x,t)$ satisfies equation \eqref{equation301} for almost all $x>N$ and $t>0$. Further, this representation is unique up to sets of measure zero.
\end{theorem}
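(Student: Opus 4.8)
The plan is to pass from the abstract semigroup solution $t\mapsto G_K(t)c_0$, which is an $X_C$-valued function, to a genuine scalar function of the two variables $(x,t)$, and then to verify that this scalar function solves \eqref{equation301} pointwise almost everywhere. First I would invoke the honesty result, Theorem~\ref{theorem303} above, together with Lemma~\ref{lemma3002}, to identify the generator: since $a$ satisfies the stated local boundedness hypotheses (which we may assume throughout, as they are needed for honesty), we have $K=\overline{A_C+B_C}$, and moreover $\mathsf{L}f=(1+a)^{-1}f$ and $\mathsf{B}=\mathcal{B}$ on the relevant domains. The key structural fact I would extract is that $D(K)$ consists of functions $u\in X_C$ with $au\in L_{1,\mathrm{loc}}$ and $-au+\mathcal{B}u\in X_C$, so that for $c_0\in D(K)$ the orbit stays in $D(K)$ and $\frac{\ddn}{\ddn t}G_K(t)c_0 = K[G_K(t)c_0] = -a\,G_K(t)c_0 + \mathcal{B}[G_K(t)c_0]$ as an identity in $X_C$ for every $t>0$.

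Next I would construct the scalar representative. The map $v:[0,\infty)\to X_C$, $v(t)=G_K(t)c_0$, is continuous (indeed continuously differentiable on $(0,\infty)$), hence strongly measurable, so by the standard identification of $L_1([0,T];L_1((N,\infty),x\dd x))$ with $L_1((N,\infty)\times[0,T], x\dd x\dd t)$ there is a jointly measurable function $u_C(x,t)$ with $u_C(\cdot,t)=v(t)$ in $X_C$ for a.e.\ $t$; adjusting on a $t$-null set and using strong continuity, one arranges this for every $t\geq 0$. Absolute continuity in $t$ for a.e.\ $x$ then follows from writing, for $0<t_1<t_2$,
\[
v(t_2)-v(t_1)=\int_{t_1}^{t_2} K v(s)\dd s
\]
as a Bochner integral in $X_C$, passing to the scalar level via Fubini, and reading off that $t\mapsto u_C(x,t)$ equals the integral of the locally integrable (in $t$) function $s\mapsto (Kv(s))(x)$ for a.e.\ $x$; hence $\partial_t u_C(x,t)=(Kv(t))(x)$ for a.e.\ $(x,t)$. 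Combining this with the identity $Kv(t)=-a\,v(t)+\mathcal{B}v(t)$ in $X_C$ gives, after one more application of Fubini to handle the null sets simultaneously,
\[
\partial_t u_C(x,t)=-a(x)u_C(x,t)+\int_x^\infty a(y)b(x|y)u_C(y,t)\dd y
\]
for almost every $x>N$ and $t>0$, which is \eqref{equation301}.

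Finally, uniqueness up to null sets is immediate: any two such representatives agree with $v(t)$ in $X_C$ for every $t$ in a dense set (in fact every $t$), hence agree a.e.\ in $x$ for each such $t$, and joint measurability plus Fubini upgrades this to equality a.e.\ on $(N,\infty)\times[0,\infty)$. The main obstacle, and the step requiring genuine care rather than routine bookkeeping, is the interchange of the $t$-integration (Bochner, in $X_C$) with evaluation at a point $x$ and the attendant management of exceptional null sets: one must ensure that the same $x$-null set works for the measurability identification, for the validity of $\partial_t u_C(x,t)=(Kv(t))(x)$, and for the pointwise form of $Kv(t)=-av(t)+\mathcal Bv(t)$, so that the final pointwise equation holds a.e.\ on the product space rather than merely for a.e.\ $x$ at a.e.\ $t$ in some $x$-dependent fashion. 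This is handled by applying Fubini's theorem once to the fixed jointly measurable function $(x,s)\mapsto (Kv(s))(x)$ on $(N,\infty)\times[0,T]$, which is integrable against $x\dd x\dd s$ because $s\mapsto\|Kv(s)\|_{X_C}$ is continuous, hence bounded on $[0,T]$.
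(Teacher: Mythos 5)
Your argument is correct, and its overall architecture matches the paper's: identify the action of the generator, $Ku=\mathcal{A}u+\mathcal{B}u$ on $D(K)$, via Lemma~\ref{lemma3002} together with the explicit forms $\mathsf{L}f=(1+a)^{-1}f$, $\mathsf{T}=\mathcal{A}$, $\mathsf{B}=\mathcal{B}$, and then transfer the $X_C$-valued identity $\frac{\ddn}{\ddn t}G_K(t)c_0=K[G_K(t)c_0]$ to a jointly measurable scalar representative. The one genuine difference is how the measure-theoretic core is handled: the paper simply cites \cite[Theorem 2.40]{banasiak06Pert}, which delivers, for a continuously differentiable $X_C$-valued orbit, a jointly measurable representative $u_C(x,t)$, absolutely continuous in $t$ for a.e.\ $x$, with $\partial_t u_C(x,t)=[\frac{\ddn}{\ddn t}G_K(t)c_0](x)$ a.e.\ and uniqueness up to null sets; you instead reprove this from scratch through the identification $L_1([0,T];X_C)\cong L_1((N,\infty)\times[0,T],x\dd x\dd t)$, the Bochner-integral formula $v(t_2)-v(t_1)=\int_{t_1}^{t_2}Kv(s)\dd s$, and a single Fubini application to synchronise the exceptional null sets. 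That buys self-containedness at the cost of length; the citation buys brevity. Two minor remarks: your description of $D(K)$ (as all $u$ with $au\in L_{1,\mathrm{loc}}$ and $-au+\mathcal{B}u\in X_C$) overstates what Lemma~\ref{lemma3002} provides, which also imposes $\lim_{n\to\infty}\|(\mathsf{L}\mathsf{B})^nu\|=0$ and membership in $D(\mathsf{T})\cap D(\mathsf{B})$ --- harmless here, since you only use the formula for $Ku$ on $D(K)$, not a characterisation of $D(K)$; and honesty (Theorem~\ref{theorem303}) is not actually needed for this step, since the identification of the action of $K$ comes from Lemma~\ref{lemma3002} and the explicit form of $\mathsf{L}$ alone, exactly as in the paper's proof.
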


\begin{proof}By \cite[Theorem 2.40]{banasiak06Pert}, since $G_K(t)c_0$ is continuously differentiable, there exists a real-valued function $u_C(x,t)$, measurable on $(N,\infty)\times[0,\infty)$, which is absolutely continuous with respect to $t$ for almost all $x\in(N,\infty)$ and such that $\partial_t u_C$ exists with $u_C(x,t)=\left(G_K(t)c_0\right)(x)$ and\\
\begin{equation}\label{equation3011}
\frac{\partial u_C(x,t)}{\partial t}=\left[\frac{\ddn}{\ddn t} G_K(t)c_0\right](x),
\end{equation}\\
\noindent for almost all $t\in [0,\infty)$ and $x\in(N,\infty)$. Furthermore, the representation is unique up to sets of measure zero. We noted in Theorem~\ref{theorem303} that the operator $\mathsf{L}$ is defined by
\[\left[\mathsf{L}f\right](x)=(1+a(x))^{-1}f(x).\]
Hence the operator $\mathsf{T}$ introduced in Lemma~\ref{lemma3002} is given here by
\begin{equation*}
\left[\mathsf{T}f\right](x)=f(x)-\left[\mathsf{L}^{-1}f\right](x)=f(x)-(1+a(x))f(x)=-a(x)f(x),
\end{equation*}
and therefore on the domain $D(\mathsf{T})$, the operator $\mathsf{T}$ agrees with $\mathcal{A}$. We have already stated in Theorem~\ref{theorem303} that the operator $\mathsf{B}$ is given by the integral expression $\mathcal{B}$. Hence Lemma~\ref{lemma3002} yields
\[Ku= \mathcal{A}u+\mathcal{B}u\hspace{2.5mm} \text{for}\hspace{2.5mm}u\in D(K).\]
Therefore, the semigroup solution $G_K(t)c_0$ satisfies
\begin{equation}\label{equation3013}
\left[\frac{\ddn}{\ddn t} G_K(t)c_0\right](x)=\left[\mathcal{A}G_K(t)c_0\right](x)+\left[\mathcal{B}G_K(t)c_0\right](x).
\end{equation}
The right-hand side of \eqref{equation3013} is independent of our choice of representation of $G_K(t)c_0$, up to sets of measure zero. Hence combining \eqref{equation3011} and \eqref{equation3013}, we get that $u_C(x,t)$ satisfies
\begin{align*}
\hspace{-8mm}\frac{\partial u_C(x,t)}{\partial t}&=-a(x)u_C(x,t)+\int_{x}^{\infty}a(y)b(x|y)u_C(y,t)\dd y,
\end{align*}
for almost all $x>N$ and $t>0$, as required.
\end{proof}

\section{Discrete Fragmentation Regime}
\noindent We now turn our attention to the discrete mass regime and equation \eqref{equation302}. As with
equation \eqref{equation301}, our intention is to recast the equations as an abstract differential equation
within an appropriate vector space. With this aim in mind, we introduce the space $X_D=\mathbb{R}^N$, equipped with the weighted norm:
\[\left\|v\right\|_{X_D}=\sum_{j=1}^{N}j|v_j|,\hspace{2mm} \text{where} \hspace{2mm}v=(v_1,\ldots,v_N).\]

\noindent The choice of this norm is driven by the fact that, when applied to our solution, it will provide a
measure of the total mass within the discrete regime. From the first two terms on the right-hand side of equation \eqref{equation302} we get the operators $A_D$ and $B_D$ defined on $X_D$ by \[(A_Dv)_i=-a_iv_{i}\hspace{2mm}\text{and}\hspace{2mm}(B_Dv)_i=\sum_{j=i+1}^{N}a_jb_{i,j}v_{j},\hspace{2mm}\text{for}\hspace{2mm}i=1,
\ldots,N,\]
where in the case of $i=N$, the empty sum in $(B_Dv)_N$ is taken to be 0. As linear operators on a finite-dimensional space, both $A_D$ and $B_D$ are bounded, hence by \cite[Theorem 1.2]{pazy83}, $A_D+B_D$ generates a uniformly continuous semigroup $\left(T(t)\right)_{t\geq0}$ on $X_D$.

\begin{lemma} \label{lemma303} The semigroup $\left(T(t)\right)_{t\geq0}$ generated by $A_D+B_D$ on $X_D$ is a positive semigroup.
\end{lemma}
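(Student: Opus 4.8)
The plan is to exploit the fact that on the finite-dimensional space $X_D=\mathbb{R}^N$ the semigroup $(T(t))_{t\geq0}$ is nothing but the matrix exponential $e^{t(A_D+B_D)}$, and to show that this matrix has nonnegative entries for every $t\geq0$. Since the positive cone $X_{D+}$ consists precisely of the vectors with nonnegative components, entrywise nonnegativity of $e^{t(A_D+B_D)}$ is exactly the statement that $(T(t))_{t\geq0}$ maps $X_{D+}$ into itself, i.e.\ that it is a positive semigroup.

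First I would record the matrix form of the generator. Writing $G:=A_D+B_D$, the operator $A_D$ contributes only the diagonal entries $G_{ii}=-a_i$, while $B_D$ contributes the strictly upper-triangular entries $G_{ij}=a_jb_{i,j}$ for $j>i$ (and $G_{ij}=0$ for $j<i$). Because $a_j\geq0$ and $b_{i,j}\geq0$ by the physical nonnegativity assumptions on the fragmentation data, all off-diagonal entries of $G$ are nonnegative; only the diagonal entries $-a_i$ can be negative.

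Next I would rescale. Choosing $\lambda:=\max_{1\leq i\leq N}a_i\geq0$, the matrix $\lambda I+G$ has entries $\lambda-a_i\geq0$ on the diagonal and the (already nonnegative) entries $a_jb_{i,j}$ off the diagonal, so $\lambda I+G$ is a nonnegative matrix. Consequently every power $(\lambda I+G)^n$, $n\geq0$, is nonnegative, and
\[
T(t)=e^{tG}=e^{-\lambda t}\,e^{t(\lambda I+G)}=e^{-\lambda t}\sum_{n=0}^{\infty}\frac{t^n}{n!}\,(\lambda I+G)^n
\]
is, for each $t\geq0$, a convergent series with nonnegative scalar coefficients of nonnegative matrices, hence itself a nonnegative matrix. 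Therefore $T(t)v\in X_{D+}$ whenever $v\in X_{D+}$, which is the asserted positivity.

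I do not anticipate a genuine obstacle here: this is the standard observation that a matrix generates a positive semigroup precisely when it is quasi-positive (a Metzler matrix), and the only thing to verify is that the off-diagonal structure of $A_D+B_D$ inherits the nonnegativity of the $a_i$ and $b_{i,j}$. The one point worth noting is that the weighting in $\|\cdot\|_{X_D}$ is irrelevant at this stage, since it does not alter the cone $X_{D+}$; the weight will enter only later, in the contractivity and mass-balance estimates.
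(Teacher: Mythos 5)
Your proof is correct, but it takes a genuinely different route from the paper. The paper works through the resolvent: it solves $\left(\lambda I_D-(A_D+B_D)\right)v=w$ explicitly by backward substitution, exploiting the upper-triangular structure to show by induction (from $i=N$ down to $i=1$) that $v\geq 0$ whenever $w\geq 0$ and $\lambda>0$, and then invokes the standard characterisation that a $C_0$-semigroup is positive if and only if its resolvent is positive for large $\lambda$. You instead argue directly on the matrix exponential: $A_D+B_D$ is a Metzler matrix (nonnegative off-diagonal entries, since $a_j b_{i,j}\geq 0$), so shifting by $\lambda=\max_i a_i$ produces an entrywise nonnegative matrix, and $e^{t(A_D+B_D)}=e^{-\lambda t}e^{t(\lambda I+A_D+B_D)}$ is then a convergent series of nonnegative matrices with nonnegative coefficients. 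Your argument is more elementary and self-contained --- it needs no abstract characterisation theorem and does not even use the upper-triangularity, only the sign pattern of the off-diagonal entries. The paper's resolvent computation, on the other hand, produces the explicit inverse $R(\lambda,A_D+B_D)$ as a by-product, which fits more naturally into the semigroup-theoretic framework used throughout the rest of the analysis. Your closing remark that the weight in $\|\cdot\|_{X_D}$ is irrelevant here because it does not change the cone $X_{D+}$ is also correct and worth making explicit.
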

\begin{proof}Let us consider the equation
\[\left(\lambda I_D-(A_D+B_D)\right)v=w,\]
with $\lambda>0$. Adopting the convention that sums over empty index sets are zero, the above equation written elementwise becomes
\[(\lambda+a_i)v_{i}-\hspace{-1.6mm}\sum_{j=i+1}^{N}a_jb_{i,j}v_{j}=w_i.\]
Inverting this for $i=N$ we get
\[(\lambda+a_N)v_{N}=w_N\Rightarrow v_{N}=\frac{w_N}{\lambda+a_N}.\]
Now suppose that for $i=N,N-1,\ldots,k+1,$ we have $v_i=F_i(\lambda,w)$ where $F_i\geq0$ for $\lambda>0$ and $w\in\mathbb{R}_+^N$. Then, for $i=k$, we have
\begin{align*}
(\lambda+a_k)v_{k}-\hspace{-1.6mm}\sum_{j=k+1}^{N}\hspace{-1.6mm}a_jb_{k,j}F_j(\lambda,w)=w_k,
\end{align*}
and therefore
\begin{align*}
v_{k}=\frac{1}{\lambda+a_k}\left(\hspace{-.3mm}w_k
+\hspace{-1.6mm}\sum_{j=k+1}^{N}\hspace{-1.6mm}a_jb_{k,j}F_j(\lambda,w)\hspace{-.3mm}\right)=F_k(\lambda,w)\geq0.
\end{align*}

\noindent Therefore, by induction the resolvent operator $R(\lambda,A_D+B_D)$ is a positive operator for $\lambda>0$. Hence by \cite[Chapter 6, Theorem 1.8]{engel00} the semigroup $\left(T(t)\right)_{t\geq0}$ is positive.
\end{proof}

\noindent From the third term of equation \eqref{equation302} we define the operator $C:D(C)\subseteq X_C\rightarrow X_D$, pointwise by
\[(Cf)_i=\int_{N}^{\infty}a(y)b_i(y)f(y)\dd y, \hspace{6mm} D(C)=\left\lbrace f\in X_C:Cf \in X_D\right\rbrace,\]
\noindent for $i = 1,\ldots,N$. Equation~\eqref{equation302} is then reformulated as the abstract equation:
\vspace{1mm}
\begin{equation}\label{equation312}
\frac{\ddn}{\ddn t}u_D(t)=(A_D+B_D)[u_D(t)]+C[u_C(t)],\hspace{3mm}t>0;\hspace{3mm}u_D(0)=d_{0}.
\end{equation}
\noindent Having introduced the operator $C$, we now spend some time examining it in more detail and establishing
the properties we will require in the upcoming section. We begin with the following lemma relating the domain
of $C$ with that of $A_C$ from the previous section.

\begin{lemma} \label{lemma304}For $f\in D(A_C)$, $\left\|Cf\right\|_{X_D}\leq\left\|A_Cf\right\|_{X_C}$. Hence $D(A_C)\subseteq D(C)$.
\end{lemma}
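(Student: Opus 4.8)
The plan is to mimic the estimate used in Lemma~\ref{lemma301}, replacing the continuous mass-conservation condition \eqref{equation303} by the fact that the discrete part of the fragmentation outcome carries at most the total mass $y$. First I would write out the norm $\left\|Cf\right\|_{X_D}$ directly from the definitions, using the weight $j$ in the norm on $X_D$ and the triangle inequality to pass the modulus inside the integral:
\begin{align*}
\left\|Cf\right\|_{X_D}&=\sum_{i=1}^{N}i\left|\int_{N}^{\infty}a(y)b_i(y)f(y)\dd y\right|
\leq\sum_{i=1}^{N}i\int_{N}^{\infty}a(y)b_i(y)\left|f(y)\right|\dd y.
\end{align*}
Here I would invoke nonnegativity of $a$ and of each $b_i$ (stated in the model set-up) to justify both the modulus manipulation and the sign of the integrand.

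The key step is then to interchange the finite sum and the integral — which is unconditionally legitimate, being a finite sum of (possibly infinite) integrals of nonnegative functions — so as to obtain
\begin{align*}
\left\|Cf\right\|_{X_D}&\leq\int_{N}^{\infty}a(y)\left|f(y)\right|\left(\sum_{i=1}^{N}ib_i(y)\right)\dd y.
\end{align*}
At this point I would apply the mass-conservation identity \eqref{equation303}, namely $\int_{N}^{y}xb(x|y)\dd x+\sum_{j=1}^{N}jb_j(y)=y$ for $y>N$, together with the nonnegativity of $b(\cdot|y)$, to conclude that $\sum_{i=1}^{N}ib_i(y)\leq y$ for almost every $y>N$. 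Substituting this bound gives
\begin{align*}
\left\|Cf\right\|_{X_D}&\leq\int_{N}^{\infty}a(y)\left|f(y)\right|y\dd y=\left\|A_Cf\right\|_{X_C},
\end{align*}
which is precisely the claimed inequality.

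The inclusion $D(A_C)\subseteq D(C)$ then follows immediately: if $f\in D(A_C)$ then $\left\|A_Cf\right\|_{X_C}<\infty$, so $\left\|Cf\right\|_{X_D}<\infty$, i.e. $Cf\in X_D$, which is exactly the defining condition for membership of $D(C)$. I do not expect any serious obstacle here; the proof is a routine adaptation of Lemma~\ref{lemma301}, and the only point needing a word of care is the justification that the discrete branching numbers satisfy $\sum_{i=1}^{N}ib_i(y)\leq y$, which is an immediate consequence of \eqref{equation303} once one notes that the dropped term $\int_{N}^{y}xb(x|y)\dd x$ is nonnegative. It is worth remarking, as the paper does for Lemma~\ref{lemma301}, that this bound has a transparent physical meaning: when a particle of mass $y>N$ fragments, the total mass of the resulting particles landing in the discrete regime cannot exceed $y$.
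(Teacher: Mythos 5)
Your argument is correct and is essentially identical to the paper's proof: expand $\left\|Cf\right\|_{X_D}$, move the modulus inside the integral, exchange the finite sum with the integral, and then bound $\sum_{i=1}^{N}ib_i(y)\leq y$ via the mass-conservation condition~\eqref{equation303}. The extra remarks you add (justifying the interchange and spelling out why finiteness of $\left\|A_Cf\right\|_{X_C}$ gives $f\in D(C)$) are fine but do not change the route.
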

\vspace{1mm}
\begin{proof}Let $f\in D(A_C)$. Then
\begin{align}\label{equation313}
\left\|Cf\right\|_{X_D}&=\sum_{i=1}^{N}i\left|\int_{N}^{\infty}a(y)b_i(y)f(y)\dd y\right|\nonumber\\
                     &\leq\sum_{i=1}^{N}i\left(\int_{N}^{\infty}a(y)b_i(y)\left|f(y)\right|\dd y\right)\nonumber\\
                     &=\int_{N}^{\infty}a(y)\left|f(y)\right|\left(\sum_{i=1}^{N}ib_i(y)\right)\dd y\\
                     &\leq\int_{N}^{\infty}a(y)\left|f(y)\right|y\dd y=\left\|A_Cf\right\|_{X_C}.\nonumber
\end{align}
\noindent The final inequality is a consequence of the mass conservation condition~\eqref{equation303}.
\end{proof}
\noindent Having established that $C$ is well-defined on $D(A_C)$, the next lemma provides a bound on $\left\|Cf\right\|_{X_D}$ when $f\in D(A_C)$.
\begin{lemma} \label{lemma305} The operator $C$ is $(A_C+B_C)$-bounded on $D(A_C)$.
\end{lemma}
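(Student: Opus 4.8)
The plan is to verify the inequality in Definition~\ref{definition237} directly, with relative bound constants $a=1$ and $b=0$; that is, to establish
\[
\|Cf\|_{X_D}\le\|(A_C+B_C)f\|_{X_C}\qquad\text{for all }f\in D(A_C).
\]
The naive attempt --- bound $\|Cf\|_{X_D}\le\|A_Cf\|_{X_C}$ via Lemma~\ref{lemma304}, then control $\|A_Cf\|_{X_C}$ through $A_Cf=(A_C+B_C)f-B_Cf$ --- is circular, since the triangle inequality only yields $\|A_Cf\|_{X_C}\le\|(A_C+B_C)f\|_{X_C}+\|A_Cf\|_{X_C}$. The idea is instead to work with the \emph{difference} $\|A_Cf\|_{X_C}-\|B_Cf\|_{X_C}$: on the one hand this is $\le\|(A_C+B_C)f\|_{X_C}$ (again by the triangle inequality applied to $A_Cf=(A_C+B_C)f-B_Cf$); on the other hand, it dominates $\|Cf\|_{X_D}$, and the link between the two is exactly the continuous-regime mass-conservation identity~\eqref{equation303}, which says $\sum_{i=1}^{N}i\,b_i(y)=y-\int_N^y x\,b(x|y)\dd x$.

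Concretely, I would fix $f\in D(A_C)$; then $B_Cf\in X_C$ by Lemma~\ref{lemma301} and $Cf\in X_D$ by Lemma~\ref{lemma304}, so every norm below is finite. Revisiting the chain of inequalities in the proof of Lemma~\ref{lemma304} but stopping one line before the end gives
\[
\|Cf\|_{X_D}\le\int_N^\infty a(y)|f(y)|\Big(\sum_{i=1}^{N}i\,b_i(y)\Big)\dd y
=\int_N^\infty a(y)|f(y)|\Big(y-\int_N^y x\,b(x|y)\dd x\Big)\dd y ,
\]
the last equality being~\eqref{equation303}. Likewise, the Fubini step in the proof of Lemma~\ref{lemma301} (valid because $b\ge0$) gives $\|B_Cf\|_{X_C}\le\int_N^\infty a(y)|f(y)|\big(\int_N^y x\,b(x|y)\dd x\big)\dd y$, while $\|A_Cf\|_{X_C}=\int_N^\infty a(y)|f(y)|\,y\dd y$ directly. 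Subtracting, the right-hand side displayed above equals $\|A_Cf\|_{X_C}-\int_N^\infty a(y)|f(y)|\big(\int_N^y x\,b(x|y)\dd x\big)\dd y\le\|A_Cf\|_{X_C}-\|B_Cf\|_{X_C}$, and this quantity is nonnegative by~\eqref{equation303}. Finally, from $A_Cf=(A_C+B_C)f-B_Cf$ we have $\|A_Cf\|_{X_C}-\|B_Cf\|_{X_C}\le\|(A_C+B_C)f\|_{X_C}$, and chaining the estimates yields $\|Cf\|_{X_D}\le\|(A_C+B_C)f\|_{X_C}$. Hence $C$ is $(A_C+B_C)$-bounded, with $(A_C+B_C)$-bound at most $1$.

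I do not expect any serious obstacle; the only point requiring care is bookkeeping. The cited proofs of Lemmas~\ref{lemma301} and~\ref{lemma304} are already phrased for general $f\in D(A_C)$ (absolute values are inserted from the outset), so the mass-conservation identity~\eqref{equation303} is applied to $|f|$, not to $f$; one must keep the absolute values throughout so that the intermediate integral expression is common to both estimates. No positivity or closedness hypothesis on $A_C+B_C$ is needed, so there is no issue with $A_C+B_C$ failing to be closed.
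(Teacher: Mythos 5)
Your proposal is correct and follows essentially the same route as the paper: the paper adds the intermediate integral bounds for $\|B_Cf\|_{X_C}$ and $\|Cf\|_{X_D}$, invokes the mass-conservation identity~\eqref{equation303} to get $\|B_Cf\|_{X_C}+\|Cf\|_{X_D}\le\|A_Cf\|_{X_C}$, and then concludes via the reverse triangle inequality, exactly as you do (you merely substitute~\eqref{equation303} before subtracting rather than after). The relative bound $a=1$, $b=0$ you obtain matches the paper's estimate~\eqref{bound}.
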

\begin{proof} Combining \eqref{equation305} and \eqref{equation313}, for $f\in D(A_C)$ we have
\begin{align*}
\left\|B_Cf\right\|_{X_C}+\left\|Cf\right\|_{X_D}\leq&\int_{N}^{\infty}a(y)\left|f(y)\right|\left(\int_{N}^{y}xb(x|y)\dd x\right)\dd y\\
& +\int_{N}^{\infty}a(y)\left|f(y)\right|\left(\sum_{i=1}^{N}ib_i(y)\right)\dd y\\
=&\int_{N}^{\infty}a(y)\left|f(y)\right|\left(\int_{N}^{y}xb(x|y)\dd x+\sum_{i=1}^{N}ib_i(y)\right)\dd y\\
=&\int_{N}^{\infty}a(y)\left|f(y)\right|y\dd y=\left\|A_Cf\right\|_{X_C}.
\end{align*}
\noindent Subtracting $\left\|B_Cf\right\|_{X_C}$ from both sides gives us
\begin{align}\label{bound}
\left\|Cf\right\|_{X_D}&\leq\left\|A_Cf\right\|_{X_C}-\left\|B_Cf\right\|_{X_C}=\left\|A_Cf\right\|_{X_C}-\left\|-B_Cf\right\|_{X_C}\nonumber\\
                     &\leq\left\|A_Cf-(-B_Cf)\right\|_{X_C}=\left\|(A_C+B_C)f\right\|_{X_C}.
\end{align}
\end{proof}

\begin{lemma} \label{lemma306} The operator $C$ can be extended to $D(K)$, with this extension being $K$-bounded on $D(K)$.
\end{lemma}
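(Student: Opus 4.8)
The plan is to build the extension by a density argument anchored on the honesty of $(G_K(t))_{t\geq0}$: by Theorem~\ref{theorem303} (together with the standing hypotheses on $a$) and Theorem~\ref{theorem289} we have $K=\overline{A_C+B_C}$, so every $u\in D(K)$ admits a sequence $\{u_n\}_{n=1}^\infty\subset D(A_C)$ with $u_n\to u$ and $(A_C+B_C)u_n\to Ku$, both in $X_C$. The estimate of Lemma~\ref{lemma305} then transfers the Cauchy property to the discrete side: for all $m,n$,
\[\|Cu_n-Cu_m\|_{X_D}=\|C(u_n-u_m)\|_{X_D}\leq\|(A_C+B_C)(u_n-u_m)\|_{X_C}\longrightarrow 0,\]
so $\{Cu_n\}$ is Cauchy in the finite-dimensional (hence complete) space $X_D$ and converges; I would then define the extension by $\widetilde{C}u:=\lim_{n\to\infty}Cu_n$.

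Next I would check that $(\widetilde{C},D(K))$ is well defined, linear, and genuinely extends $(C,D(A_C))$. Independence of the approximating sequence is the usual interleaving argument: if $\{v_n\}$ is a second admissible sequence for the same $u$, then $u_1,v_1,u_2,v_2,\dots$ is again admissible and has a convergent $C$-image, forcing $\lim_n Cu_n=\lim_n Cv_n$; linearity transfers from $C$ in the same way, and taking the constant sequence shows $\widetilde{C}=C$ on $D(A_C)$. Finally, letting $n\to\infty$ in the Lemma~\ref{lemma305} bound $\|Cu_n\|_{X_D}\leq\|(A_C+B_C)u_n\|_{X_C}$ gives
\[\|\widetilde{C}u\|_{X_D}\leq\|Ku\|_{X_C}\qquad\text{for every }u\in D(K),\]
which is precisely relative $K$-boundedness in the sense of Definition~\ref{definition237}, with $A=K$, $B=\widetilde{C}$ and constants $a=1$, $b=0$; equivalently, since $K$ generates a semigroup so that $\rho(K)\neq\emptyset$, this shows $\widetilde{C}R(\lambda,K)\in B(X_C,X_D)$, in line with Lemma~\ref{lemma231}.

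The one place requiring care --- and the reason the lemma is placed after Theorem~\ref{theorem303} --- is the appeal to $K=\overline{A_C+B_C}$: it is exactly this that allows a generic $u\in D(K)$ to be approximated from $D(A_C)$ with simultaneous convergence of $A_C+B_C$. If one instead wanted the statement for an arbitrary Kato--Voigt extension $K$ (possibly strictly larger than the closure), the extension would have to be constructed directly from the representation $Ku=\mathsf{T}u+\mathsf{B}u$ of Lemma~\ref{lemma3002}: set $\widetilde{C}u:=\sup_n Cu_n$ along nonnegative nondecreasing sequences $u_n\uparrow|u|$ in $D(A_C)$, using the positivity of $C$ (each $b_i\geq0$) and Lebesgue's monotone convergence theorem to identify the supremum, and control it via the mass-conservation bound $\sum_{i=1}^N ib_i(y)\leq y$. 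Beyond this conceptual point, the argument is routine: completeness of $X_D$, the interleaving, and the passage to the limit in the norm inequality require nothing further.
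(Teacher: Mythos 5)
Your proposal is correct and follows essentially the same route as the paper: invoke $K=\overline{A_C+B_C}$ (available from the honesty results under the standing assumptions on $a$) to approximate $u\in D(K)$ by $u_n\in D(A_C)$ with $(A_C+B_C)u_n\to Ku$, use the Lemma~\ref{lemma305} estimate to make $\{Cu_n\}$ Cauchy in the complete space $X_D$, define the extension as the limit, verify independence of the approximating sequence, and pass to the limit in the bound to obtain $\|Cu\|_{X_D}\leq\|Ku\|_{X_C}$. The only differences are cosmetic (the paper checks well-definedness by a direct triangle-inequality estimate rather than interleaving, and does not need your fallback construction for a non-closure extension).
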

\begin{proof} Let $v\in D(K)$; then from \cite[page 166]{kato95}, since $(K,D(K))$ is the closure of $(A_C+B_C,D(A_C))$, there exists a sequence $\left\lbrace v_n\right\rbrace_{n=1}^\infty\subset D(A_C)$ such that $v_n\rightarrow v$ and $(A_C+B_C)v_n\rightarrow Kv$ in $X_C$. By the linearity of the operators and Lemma~\ref{lemma305}, we have that
\[\|Cv_m-Cv_n\|_{X_D}\leq\|(A_C+B_C)v_m-(A_C+B_C)v_n\|_{X_C}.\]
\noindent The sequence $\left\lbrace(A_C+B_C)v_n\right\rbrace_{n=1}^\infty$ is convergent in $X_C$; therefore it must also be a Cauchy sequence in $X_C$ and, by the above bound, $\left\lbrace Cv_n\right\rbrace_{n=1}^\infty$ must also be Cauchy in $X_D$. Since the space $X_D$ is complete, the sequence $\left\lbrace Cv_n\right\rbrace_{n=1}^\infty$ must necessarily converge to a limit, which we denote by $Cv$. Further, the limit $Cv$ is independent of the sequence $\left\lbrace v_n\right\rbrace_{n=1}^\infty$, as we will now demonstrate. Suppose that $\left\lbrace w_n\right\rbrace_{n=1}^\infty\subset D(A_C)$ shares the attributes of $\left\lbrace v_n\right\rbrace_{n=1}^\infty$; then we can write
\begin{align*}
&\left\|Cw_n-Cv\right\|_{X_D}\leq\left\|Cw_n-Cv_n\right\|_{X_D}+\left\|Cv_n-Cv\right\|_{X_D}\\
                          &\leq\left\|(A_C+B_C)w_n-(A_C+B_C)v_n\right\|_{X_C}+\left\|Cv_n-Cv\right\|_{X_D}\\
                          &\leq\left\|(A_C+B_C)w_n-Kv\right\|_{X_C}+\left\|Kv-(A_C+B_C)v_n\right\|_{X_C}+\left\|Cv_n-Cv\right\|_{X_D}.
\end{align*}
\noindent From this we may deduce that $\left\lbrace Cw_n\right\rbrace_{n=1}^\infty$ also converges to $Cv$. The $K$-boundedness of $C$ on $D(K)$ is obtained from the $(A_C+B_C)$-boundedness of $C$ by passing the limits through the norms in~\eqref{bound}.
\end{proof}

\section{Full System}
\noindent Having considered both of the regimes separately, we now combine the equations from the continuous regime \eqref{equation306} and the discrete regime \eqref{equation312}, writing them as the following abstract Cauchy problem on the product space $X=X_D\times X_C$:
\begin{equation}\label{equation314}
\frac{\ddn}{\ddn t}u(t)=\textit{\textbf{A}}[u(t)],\hspace{3mm}t>0; \hspace{3mm}u(0)=u_0\in D(\textit{\textbf{A}})=X_D\times D(K),
\end{equation}
\noindent where $u(t)$, $\textit{\textbf{A}}$ and $u_0$ are given by\\
\[u(t)=\left(\begin{array}{c} u_D(t) \\ u_C(t) \\ \end{array} \right), \hspace{1mm}
\textit{\textbf{A}}=\left(\begin{array}{cc} (A_D+B_D) & C \\ 0_{DC} & K \\ \end{array} \right), \hspace{1mm}
u_0=\left( \begin{array}{c} d_0 \\ c_0 \\ \end{array} \right).\]\\
The presence of the subscripts on the zero operators indicate the spaces they map from and to; for example, $0_{DC}$ maps from $X_D$ into $X_C$. Our task is now to prove that the operator $\textit{\textbf{A}}$ generates a semigroup on the space $X$. In order to more easily show this, we consider $(\textit{\textbf{A}},D(\textit{\textbf{A}}))$ as the sum of two operators $(\textit{\textbf{A}}_1,D(\textit{\textbf{A}}))$ and $(\textit{\textbf{A}}_2,D(\textit{\textbf{A}}))$, where

\[\textit{\textbf{A}}
=\underbrace{\left(\begin{array}{cc} 0_{DD} & C \\ 0_{DC} & K \\ \end{array} \right)}_{\textit{\textbf{A}}_1}
+\underbrace{\left(\begin{array}{cc} (A_D+B_D) & 0_{CD} \\ 0_{DC} & 0_{CC} \\ \end{array}\right)}_{\textit{\textbf{A}}_2}.\]

\noindent From the boundedness of $(A_D+B_D)$ it is easily seen that $(\textit{\textbf{A}}_2,D(\textit{\textbf{A}}))$ is a bounded operator. The strategy is then to show that $(\textit{\textbf{A}}_1,D(\textit{\textbf{A}}))$ generates a semigroup and then treat $\textit{\textbf{A}}_2$ as a bounded perturbation of this generator. The first of these steps is tackled in the following theorem.

\begin{theorem}\label{theorem304} The operator $\textit{\textbf{A}}_1$, as given above, generates a $C_0$-semigroup on the product space $X=X_D\times X_C$.
\end{theorem}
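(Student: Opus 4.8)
The plan is to verify the five hypotheses of Theorem~\ref{theorem290} for the operator matrix $\textit{\textbf{A}}_1$, with the identification $X = X_D$ (the finite-dimensional ``upper'' space), $Y = X_C$, the diagonal operators being $0_{DD}$ (trivially a generator, since it is bounded) and $K$ (a generator by Theorem~\ref{theorem301}), and the off-diagonal operator being $C \colon D(K) \subseteq X_C \to X_D$. Condition~(i) is immediate: $0_{DD}$ generates the semigroup $T(t) = I_D$ on $X_D$. Condition~(ii) is exactly Theorem~\ref{theorem301}: $K$ generates the substochastic (hence $C_0$) semigroup $\left(G_K(t)\right)_{t\geq0}$ on $X_C$. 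Condition~(iii), that $C$ is relatively $K$-bounded on $D(K)$, is precisely the content of Lemma~\ref{lemma306}.

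Condition~(iv), the closedness of $\left(\textit{\textbf{A}}_1, D(\textit{\textbf{A}}_1)\right)$ with $D(\textit{\textbf{A}}_1) = X_D \times D(K)$, I would prove by hand: take a sequence $\left(v_n, w_n\right) \to (v,w)$ in $X_D \times X_C$ with $\textit{\textbf{A}}_1(v_n,w_n) = (Cw_n + K w_n,\, \cdots)$ --- more precisely with $\textit{\textbf{A}}_1(v_n,w_n) \to (p,q)$ in $X_D \times X_C$. Reading off the second component gives $K w_n \to q$; since $K$ is closed (it is a generator), $w \in D(K)$ and $Kw = q$. Then $w_n \to w$ with $K w_n \to Kw$, so by the $K$-boundedness estimate of Lemma~\ref{lemma306} (pass to the limit in $\|C w_n - C w_m\|_{X_D} \le a\|K w_n - K w_m\|_{X_C} + b\|w_n - w_m\|_{X_C}$) we get $C w_n \to C w$ in $X_D$. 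Reading off the first component, $C w_n + 0_{DD} v_n = C w_n \to p$, whence $p = Cw$, and $v \in X_D = D(0_{DD})$ automatically. Thus $(v,w) \in D(\textit{\textbf{A}}_1)$ and $\textit{\textbf{A}}_1(v,w) = (p,q)$, giving closedness.

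Condition~(v) is the step I expect to be the main obstacle, since it requires controlling the convolution operator $\tilde R(t) f = \int_0^t T(t-s)\, C\, G_K(s) f \dd s = \int_0^t C\, G_K(s) f \dd s$ (using $T \equiv I_D$) and showing it extends to a bounded operator $R(t) \in B(X_C, X_D)$ that stays uniformly bounded as $t \searrow 0$. Here I would exploit the mass-control structure rather than the relative bound directly: for $f \in D(K)_+$, the honesty estimate (Theorem~\ref{theorem303}) or, more simply, the substochasticity of $\left(G_K(t)\right)_{t\geq0}$ together with the computation in Theorem~\ref{theorem301} shows that $\frac{\ddn}{\ddn t}\|G_K(t)f\|_{X_C} = -c(G_K(t)f) \le 0$ and in fact the mass lost up to time $t$ is $\int_0^t c(G_K(s)f)\dd s = \|f\|_{X_C} - \|G_K(t)f\|_{X_C} \le \|f\|_{X_C}$. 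Since $\|C g\|_{X_D} \le \int_N^\infty a(y)|g(y)| y \dd y$ (the bound from Lemma~\ref{lemma304}) and, crucially, $\int_N^\infty a(y) g(y) y \dd y = c(g) + \int_N^\infty \left(\int_N^y x\, b(x|y)\dd x + \sum_{i=1}^N i\, b_i(y)\right) a(y) g(y)\dd y$ --- wait, this last quantity is not obviously finite for general $g \in X_C$, so instead I would bound $\|C g\|_{X_D}$ directly in terms of the total fragmentation flux and integrate: $\int_0^t \|C G_K(s) f\|_{X_D}\dd s \le \int_0^t \left(\sum_{i=1}^N i \int_N^\infty a(y) b_i(y) (G_K(s)f)(y)\dd y\right)\dd s$, and since $\sum_{i=1}^N i\, b_i(y) \le y$ and the mass flux through fragmentation integrates to at most $\|f\|_{X_C}$ over $[0,\infty)$, we obtain $\|\tilde R(t) f\|_{X_D} \le \|f\|_{X_C}$ for all $t \ge 0$ and all $f \in D(K)_+$; by linearity and density of $D(K)$ in $X_C$ this extends to $R(t) \in B(X_C, X_D)$ with $\|R(t)\| \le 1$, uniformly in $t$, giving~(v). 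With all five conditions verified, Theorem~\ref{theorem290} yields that $\textit{\textbf{A}}_1$ generates a $C_0$-semigroup on $X_D \times X_C$, namely $\textit{\textbf{T}}_1(t) = \left(\begin{smallmatrix} I_D & R(t) \\ 0 & G_K(t)\end{smallmatrix}\right)$.
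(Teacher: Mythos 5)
Your identification of the spaces and operators in Theorem~\ref{theorem290} is the same as the paper's, and conditions $(i)$--$(iii)$ are handled identically. Your treatment of condition $(iv)$ is a legitimate alternative: the paper instead introduces the equivalent weighted norm $\|\cdot\|_\alpha$ to make $\textit{\textbf{C}}$ relatively $\textit{\textbf{K}}$-bounded with bound $\alpha<1$ and then appeals to a perturbation result for closedness, whereas your direct sequential argument (closedness of $K$, plus $\|C(w_n-w)\|_{X_D}\leq\|K(w_n-w)\|_{X_C}$ from Lemma~\ref{lemma306}) reaches the same conclusion more elementarily and does not need the relative bound to be strictly less than one.

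The genuine gap is in condition $(v)$. First, the estimate you lean on is not available under the hypotheses of Theorem~\ref{theorem304}: the identity $\frac{\ddn}{\ddn t}\|G_K(t)f\|_{X_C}=-c(G_K(t)f)$ is precisely honesty, which is \emph{not} a consequence of substochasticity together with the computation in Theorem~\ref{theorem301}; it requires the additional assumptions on $a$ in Theorem~\ref{theorem303} ($\limsup_{x\to N^+}a(x)<\infty$, $a\in L_{\infty,loc}$), which Theorem~\ref{theorem304} does not impose and which the paper's proof never uses. Second, even granting honesty, your passage from the bound on $D(K)_+$ to all of $X_C$ ``by linearity and density'' is not immediate: for $f\in D(K)$ one cannot in general assert $f_\pm\in D(K)$, so the positive-part decomposition does not directly transfer the estimate $\|\tilde{R}(t)f\|_{X_D}\leq\|f\|_{X_C}$ from nonnegative to arbitrary $f\in D(K)$, and the intermediate identity $\|Cg\|_{X_D}=c(g)$ for $g\in D(K)_+$ also needs justification beyond $D(A_C)$. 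The paper avoids all of this with a purely resolvent-based argument: since $C$ is $K$-bounded, $C(\lambda I_C-K)^{-1}\in B(X_C,X_D)$ by Lemma~\ref{lemma231}, and writing $\tilde{R}(t)f=\int_0^t CG_K(s)f\dd s=C(\lambda I_C-K)^{-1}\bigl(\lambda\int_0^t G_K(s)f\dd s-G_K(t)f+f\bigr)$ together with contractivity of $(G_K(t))_{t\geq0}$ gives $\|\tilde{R}(t)f\|_{X_D}\leq\|C(\lambda I_C-K)^{-1}\|(\lambda t+2)\|f\|_{X_C}$ for every $f\in D(K)$, with no positivity or honesty assumptions; density of $D(K)$ then yields the extension $R(t)\in B(X_C,X_D)$, uniformly bounded as $t\searrow0$. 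You should replace your mass-flux argument for $(v)$ by this (or an equivalent sign-free) estimate, or else acknowledge that your proof only covers the honest case.
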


\begin{proof} In order to establish that $\textit{\textbf{A}}_1$ is a generator, we demonstrate that the conditions of Theorem~\ref{theorem290} are satisfied by our operator. Conditions $(i)$ to $(iii)$ of Theorem~\ref{theorem290} are either straightforward, or else have already been verified. The element $0_{DD}$ generates the identity semigroup on $X_D$, whilst $K$ is the generator of the substochastic semigroup $\left(G_K(t)\right)_{t\geq0}$, as was shown in Theorem~\ref{theorem301}. Additionally, the operator $C$ is $K$-bounded on $D(K)$ as was shown in Lemma~\ref{lemma306}.\\

\noindent Condition $(iv)$ requires that $(\textit{\textbf{A}}_1,D(\textit{\textbf{A}}))$ be a closed operator. To see this, let us write $(\textit{\textbf{A}}_1,D(\textit{\textbf{A}}))$ as
\[\textit{\textbf{A}}_1
=\underbrace{\left(\begin{array}{cc} 0_{DD} & 0_{CD} \\ 0_{DC} & K \\ \end{array} \right)}_{\textit{\textbf{K}}}
+\underbrace{\left(\begin{array}{cc} 0_{DD} & C \\ 0_{DC} & 0_{CC} \\ \end{array}\right)}_{\textit{\textbf{C}}},\]
with both $\textit{\textbf{K}}$ and $\textit{\textbf{C}}$ having domain $D(\textit{\textbf{A}})$. Since $(K,D(K))$ is the generator of a strongly continuous semigroup, by a standard result, \cite[Theorem 2.13]{morante98}, it must be a closed operator. It is not difficult to deduce from this that $(\textit{\textbf{K}},D(\textit{\textbf{A}}))$ must also be closed. \\

\noindent Let us now introduce a new norm on the space $X$, defined for $f={{f_D}\choose{f_C}}\in X$ by $\|f\|_\alpha=\alpha\|f_D\|_{X_D}+\|f_C\|_{X_C}$, where $0<\alpha<1$. It is a straightforward exercise to show that this new norm is equivalent to the standard norm on $X$, and therefore $(\textit{\textbf{K}},D(\textit{\textbf{A}}))$
is closed with respect to this new norm. The $K$-boundedness of $C$ from Lemma~\ref{lemma306} gives us
\[\|\textit{\textbf{C}}f\|_\alpha=\alpha\|Cf_C\|_{X_D}\leq\alpha\|Kf_C\|_{X_C}=\alpha\|\textit{\textbf{K}}f\|_\alpha.\]
Therefore, with respect to this norm, $\textit{\textbf{C}}$ is $\textit{\textbf{K}}-$bounded with $\textit{\textbf{K}}-$bound less than $1$. We may then deduce from Lemma~\ref{lemma306}, that the operator $(\textit{\textbf{A}}_1,D(\textit{\textbf{A}}))$ is closed with respect to the norm $\|\cdot\|_\alpha$ and as a result of their equivalence, is also closed under the standard norm on $X$.\\

\noindent To prove the final condition of Theorem~\ref{theorem290}, we introduce the mapping $\tilde{Q}(t):D(K)\subseteq X_C\rightarrow X_D$ defined by $\tilde{Q}(t)f=\int_{0}^{t}CG_K(s)f \dd s$. For $f\in D(K)$ we may write this as
\begin{align}\label{equation3999}
\tilde{Q}(t)f&=\int_{0}^{t}CG_K(s)f\dd s\nonumber\\
     &=\int_{0}^{t}C(\lambda I_C-K)^{-1}(\lambda I_C-K)G_K(s)f\dd s\hspace{5mm}(\lambda>0)\nonumber\\
     &=C(\lambda I_C-K)^{-1}\int_{0}^{t}(\lambda I_C-K)G_K(s)f\dd s\nonumber\\
     &=C(\lambda I_C-K)^{-1}\left\lbrace\lambda\int_{0}^{t}G_K(s)f\dd s-\int_{0}^{t}KG_K(s)f\dd s\right\rbrace\nonumber\\
     &=C(\lambda I_C-K)^{-1}\left\lbrace\lambda\int_{0}^{t}G_K(s)f\dd s-\int_{0}^{t}G_K(s)Kf\dd s\right\rbrace\nonumber\\
     &=C(\lambda I_C-K)^{-1}\left\lbrace\lambda\int_{0}^{t}G_K(s)f\dd s-G_K(t)f+f\right\rbrace.
\end{align}

\noindent The extraction of $C(\lambda I_C-K)^{-1}$ from within the integral is permitted as it is a bounded linear operator, owing to $C$ being $K$-bounded and Lemma~\ref{lemma231}. The switching of the generator $K$ and the semigroup operator $G_K(t)$ is a standard semigroup result, detailed in \cite[Theorem 2.12]{morante98}, as is the replacement of the second integral in the final step, which can be found in \cite[Chapter 2, Lemma 1.3]{engel00}. Recalling that $\left(G_K(t)\right)_{t\geq0}$ is a semigroup of contractions, the following norm bound is readily obtained from \eqref{equation3999}:
\begin{align}\label{equation315}
\|\tilde{Q}(t)f\|_{X_D}&\leq \|C(\lambda I_C-K)^{-1}\|\left\lbrace\lambda\hspace{-.5mm}\int_{0}^{t}\hspace{-.5mm}\|G_K(s)f\|_{X_C}\dd s\hspace{-.5mm}+\hspace{-.5mm}\|G_K(t)f\|_{X_C}\hspace{-.5mm}
+\hspace{-.5mm}\|f\|_{X_C}\right\rbrace\nonumber\\
&\leq \|C(\lambda I_C-K)^{-1}\|\left(\lambda t+2\right)\|f\|_{X_C}.
\end{align}
\noindent Therefore $\tilde{Q}(t)$ is bounded on $D(K)$. As a densely-defined, bounded linear operator, $\tilde{Q}(t)$ can be uniquely extended, via taking limits, to a bounded linear operator $Q(t)$ in $B(X_C,X_D)$. Further, by passing limits through the norms, the bound \eqref{equation315} holds for all $u\in X_C$ with $\tilde{Q}(t)$ replaced by $Q(t)$. As such, $Q(t)$ is uniformly bounded as $t\searrow0$. By Theorem~\ref{theorem290}, $\textit{\textbf{A}}_1$ generates a $C_0$-semigroup on the product space $X$.
\end{proof}

\noindent Having established that $(\textit{\textbf{A}}_1,D(\textit{\textbf{A}}))$ is a generator, we are now ready to
prove the same for the full operator $(\textit{\textbf{A}},D(\textit{\textbf{A}}))$.

\begin{theorem} \label{theorem305} The operator $(\textit{\textbf{A}},D(\textit{\textbf{A}}))$, generates a $C_0$-semigroup on $X=X_D\times X_C$. Furthermore, this semigroup is given by\vspace{2mm}
\begin{equation}\label{matrixsemigroup}
\textit{\textbf{T}}(t):=\left(\begin{array}{cc}T(t) & R(t) \\0_{DC} & G_K(t) \\\end{array}\right), \hspace{2mm}t\geq0,
\end{equation}
where $R(t):X_C\rightarrow X_D$ is the unique bounded linear extension of the operator $\tilde{R}(t):D(K)\subseteq X_C\rightarrow X_D$ defined by
$\tilde{R}(t)f=\int_{0}^{t}T(t-s)CG_K(s)f\dd s$.
\end{theorem}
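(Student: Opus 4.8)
The plan is to realise $\textbf{A}$ as the bounded perturbation $\textbf{A}_1+\textbf{A}_2$ already set up before the statement, where $\textbf{A}_1$ generates a $C_0$-semigroup on $X=X_D\times X_C$ by Theorem~\ref{theorem304}, and $\textbf{A}_2$ is the operator matrix with $(A_D+B_D)$ in the top-left corner and zeros elsewhere. Since $A_D+B_D$ is bounded on the finite-dimensional space $X_D$, the operator $\textbf{A}_2$ is a bounded linear operator on $X$ (with the same domain $D(\textbf{A})$, though boundedness means it extends to all of $X$). Hence Theorem~\ref{theorem285}, applied with $A=\textbf{A}_1$ and $B=\textbf{A}_2$, immediately yields that $\textbf{A}=\textbf{A}_1+\textbf{A}_2$ with domain $D(\textbf{A}_1)=D(\textbf{A})$ generates a $C_0$-semigroup $(\textbf{T}(t))_{t\ge0}$ on $X$. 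That disposes of the first assertion with essentially no work.

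The substantive part is identifying the semigroup explicitly as the upper-triangular operator matrix \eqref{matrixsemigroup}. For this I would invoke Theorem~\ref{theorem290} directly for the \emph{full} operator $\textbf{A}$, rather than going through the perturbation argument, since Theorem~\ref{theorem290} also delivers the form of the generated semigroup. So I would check conditions (i)--(v) of Theorem~\ref{theorem290} with $A$ there being $A_D+B_D$ on $X_D$, $D$ there being $K$ on $X_C$, and $B$ there being $C$. Condition (i): $A_D+B_D$ generates the positive uniformly continuous semigroup $(T(t))_{t\ge0}$ on $X_D$ (shown just before Lemma~\ref{lemma303}). Condition (ii): $K$ generates the substochastic semigroup $(G_K(t))_{t\ge0}$ on $X_C$ (Theorem~\ref{theorem301}). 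Condition (iii): $C$ is relatively $K$-bounded on $D(K)$ (Lemma~\ref{lemma306}). Condition (iv): $(\textbf{A},D(\textbf{A}))$ is closed --- this follows from the closedness of $\textbf{A}_1$ established inside the proof of Theorem~\ref{theorem304} together with the fact that a bounded perturbation of a closed operator is closed.

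Condition (v) is the only one needing a genuine estimate: the operator $\tilde R(t)f=\int_0^t T(t-s)CG_K(s)f\dd s$, a priori defined only for $f\in D(K)$, must extend uniquely to a bounded operator $R(t)\in B(X_C,X_D)$, uniformly bounded as $t\searrow0$. I would handle this by the same device as in the proof of Theorem~\ref{theorem304}: write $C=C(\lambda I_C-K)^{-1}(\lambda I_C-K)$ with $\lambda>0$, pull the bounded operator $C(\lambda I_C-K)^{-1}$ (bounded by Lemma~\ref{lemma231} and the $K$-boundedness of $C$ from Lemma~\ref{lemma306}) out of the integral, use the standard identities $(\lambda I_C-K)G_K(s)f=\lambda G_K(s)f - G_K(s)Kf$ and $\int_0^t G_K(s)Kf\dd s = G_K(t)f-f$ to rewrite $\int_0^t (\lambda I_C-K)G_K(s)f\dd s = \lambda\int_0^t G_K(s)f\dd s - G_K(t)f + f$, and then bound using $\|T(t-s)\|\le M_T$ (uniform on bounded $t$-intervals, since $(T(t))$ is uniformly continuous) and $\|G_K(s)\|\le 1$. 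This gives $\|\tilde R(t)f\|_{X_D}\le \|C(\lambda I_C-K)^{-1}\|\,M_T(\lambda t+2)\|f\|_{X_C}$, so $\tilde R(t)$ is bounded on the dense subspace $D(K)$, extends uniquely to $R(t)\in B(X_C,X_D)$, and the bound $\|R(t)\|\le \|C(\lambda I_C-K)^{-1}\|\,M_T(\lambda t+2)$ persists by passing limits through norms, hence $R(t)$ is uniformly bounded as $t\searrow0$. With all five conditions verified, Theorem~\ref{theorem290} yields that $\textbf{A}$ generates the $C_0$-semigroup given by \eqref{matrixsemigroup}, which also reconciles with the perturbation argument of the first paragraph by uniqueness of the generated semigroup. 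The main obstacle, then, is purely bookkeeping in condition (v): being careful that the semigroup switch $KG_K(s)f=G_K(s)Kf$ and the integral identity are legitimate on $D(K)$, and that $(T(t))$ being uniformly continuous supplies the uniform bound $M_T$ on $\|T(t-s)\|$ over $s\in[0,t]$ for $t$ in a neighbourhood of $0$.
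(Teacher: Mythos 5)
Your overall strategy is the same as the paper's: generation of $\textit{\textbf{A}}$ is obtained, exactly as in the paper, by treating $\textit{\textbf{A}}_2$ as a bounded perturbation (Theorem~\ref{theorem285}) of the generator $\textit{\textbf{A}}_1$ from Theorem~\ref{theorem304}, and the triangular form \eqref{matrixsemigroup} is attributed to Theorem~\ref{theorem290}. You go further than the paper in that you attempt to verify conditions (i)--(v) of Theorem~\ref{theorem290} for the full matrix (with $A_D+B_D$, $K$ and $C$ in the three corners), which the paper merely cites; your treatment of (i)--(iv) is fine, in particular closedness of $\textit{\textbf{A}}$ as a bounded perturbation of the closed operator $\textit{\textbf{A}}_1$.

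However, your verification of condition (v) -- which you rightly identify as the only substantive estimate -- contains a step that fails as written. In $\tilde{R}(t)f=\int_0^t T(t-s)\,C\,G_K(s)f\dd s$ the bounded operator $C(\lambda I_C-K)^{-1}$ sits to the \emph{right} of the $s$-dependent factor $T(t-s)$, so it cannot be pulled out of the integral, and you are therefore not entitled to invoke the identity $\int_0^t(\lambda I_C-K)G_K(s)f\dd s=\lambda\int_0^t G_K(s)f\dd s-G_K(t)f+f$, which is precisely the cancellation that produced the bound \eqref{equation315}. That device worked in the proof of Theorem~\ref{theorem304} only because there the left factor was the identity semigroup generated by $0_{DD}$. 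Bounding crudely inside the integral instead gives only $\|CG_K(s)f\|_{X_D}\leq\|KG_K(s)f\|_{X_C}\leq\|Kf\|_{X_C}$, a graph-norm bound which does not yield an extension of $\tilde{R}(t)$ to all of $X_C$. The gap is repairable: since $A_D+B_D$ is bounded, $s\mapsto T(t-s)$ is norm-differentiable with $\tfrac{\ddn}{\ddn s}T(t-s)=-T(t-s)(A_D+B_D)$, and integrating by parts against $\tilde{Q}(s)f=\int_0^s CG_K(\sigma)f\dd\sigma$ gives $\tilde{R}(t)f=\tilde{Q}(t)f+\int_0^t T(t-s)(A_D+B_D)\tilde{Q}(s)f\dd s$; applying \eqref{equation315} together with $\|T(\cdot)\|\leq M_T$ on $[0,t]$ and the boundedness of $A_D+B_D$ then yields a bound of the form $\|\tilde{R}(t)f\|_{X_D}\leq\|C(\lambda I_C-K)^{-1}\|\bigl((\lambda t+2)+M_T\|A_D+B_D\|(\lambda t^2/2+2t)\bigr)\|f\|_{X_C}$, which is uniform as $t\searrow0$, so condition (v) does hold and the remainder of your argument goes through.
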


\begin{proof}As the sum of the generator $(\textit{\textbf{A}}_1,D(\textit{\textbf{A}}))$ and the bounded linear operator $(\textit{\textbf{A}}_2,D(\textit{\textbf{A}}))$, $(\textit{\textbf{A}},D(\textit{\textbf{A}}))$ is itself a generator by Theorem~\ref{theorem285}. The form of the semigroup comes as a consequence of Theorem~\ref{theorem290}.
\end{proof}
\noindent To summarise the results so far, the existence of the semigroup $\left(\textit{\textbf{T}}(t)\right)_{t\geq0}$, means that given an initial state $\binom{d_0}{c_0}\in D(\textit{\textbf{A}})$ and provided the conditions of Theorem~\ref{theorem303} are met, namely that the fragmentation rate $a(x)$ satisfies
\[\limsup_{x\rightarrow{N}^+}a(x)<\infty \hspace{2.5mm} \text{and}\hspace{2.5mm}a\in L_{\infty,loc}(N,\infty),\]
then there exists a unique solution to the system \eqref{equation306} and \eqref{equation312}. This solution is strongly differentiable with respect to $t$ and from \eqref{matrixsemigroup}, is given by
\begin{align}\label{semigroupsols}
u_D(t)&=T(t)d_0+\int_{0}^{t}T(t-s)CG_K(s)c_0\dd s=T(t)d_0+\int_{0}^{t}T(t-s)Cu_C(s)\dd s,\nonumber\\
u_C(t)&=G_K(t)c_0.
\end{align}
Furthermore, by Theorem~\ref{theorem3003}, the existence of the semigroup solution $u_C(t)=G_K(t)c_0$ to \eqref{equation306}, provides us with a unique (up to sets of measure zero) scalar-valued, classical solution to our original continuous equation \eqref{equation301}. The nature of the space $X_D$ and the strong derivative within this space, means that the solution $u_D(t)$ to
\eqref{equation312} automatically provides us with a set of unique classical solutions to the equations \eqref{equation302}. Having determined the existence of such solutions, in the upcoming section we establish some key properties displayed by them.

\section{Solution Properties (Nonnegativity and Mass Conservation)}
\noindent The existence of the $C_{0}$-semigroup $\left(\textit{\textbf{T}}(t)\right)_{t\geq0}$ provides us with a unique strong solution, $u(t)=\textit{\textbf{T}}(t)u_0$, to the abstract Cauchy problem \eqref{equation314}. For the solution to be physically relevant given the problem setting, we would expect it to possess a number of properties. In particular, we hope that the solution preserves nonnegativity and we demonstrate conservation of mass. First off we establish the nonnegativity of the strong solution.\\

\begin{lemma}\label{lemma307} Provided that the initial data are nonnegative, i.e. $u_0=\binom{d_0}{c_0}\in D(\textit{\textbf{A}})_{+}=X_{D+}\times D(K)_{+}$, then the strong solution $u(t)=\textit{\textbf{T}}(t)u_0$, emanating from $u_0$, remains nonnegative, that is $u(t)\in D(\textit{\textbf{A}})_{+}$ for all $t\geq0$.
\end{lemma}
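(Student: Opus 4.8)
The plan is to exploit the explicit upper-triangular form of the semigroup $\left(\textit{\textbf{T}}(t)\right)_{t\geq0}$ furnished by Theorem~\ref{theorem305}, and to check positivity componentwise. Writing $u_0=\binom{d_0}{c_0}$ with $d_0\in X_{D+}$ and $c_0\in D(K)_+$, the strong solution decomposes as $u_C(t)=G_K(t)c_0$ and $u_D(t)=T(t)d_0+R(t)c_0$, so it suffices to show that $u_C(t)\in D(K)_+$ and $u_D(t)\in X_{D+}$ for every $t\geq0$ (recall $D(A_D+B_D)=X_D$, so the discrete component is automatically in the relevant domain).

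For the continuous component, since $c_0\in D(K)$ and $\left(G_K(t)\right)_{t\geq0}$ is the $C_0$-semigroup generated by $K$, we have $G_K(t)c_0\in D(K)$ for all $t\geq0$; and since this semigroup is substochastic by Theorem~\ref{theorem301}, hence positive, $G_K(t)c_0\in X_{C+}$. Thus $u_C(t)\in D(K)_+$. For the discrete component, Lemma~\ref{lemma303} gives that $\left(T(t)\right)_{t\geq0}$ is a positive semigroup on $X_D$, so $T(t)d_0\in X_{D+}$. For the cross term, first observe that $C$ is a positive operator: from $(Cf)_i=\int_N^\infty a(y)b_i(y)f(y)\dd y$ and the nonnegativity of $a$ and the $b_i$, one has $Cf\in X_{D+}$ whenever $f\in X_{C+}\cap D(C)$. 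Since $c_0\in D(K)\subseteq D(C)$ by Lemma~\ref{lemma306}, the defining integral representation applies directly, $R(t)c_0=\tilde{R}(t)c_0=\int_0^t T(t-s)CG_K(s)c_0\dd s$; for each $s\in[0,t]$ we have $G_K(s)c_0\in D(K)_+$, hence $CG_K(s)c_0\in X_{D+}$, and then $T(t-s)CG_K(s)c_0\in X_{D+}$ by positivity of $T$. As $X_{D+}$ is a closed cone, the Bochner integral of this nonnegative integrand is itself nonnegative, so $R(t)c_0\in X_{D+}$ and therefore $u_D(t)=T(t)d_0+R(t)c_0\in X_{D+}$. Combining the two components yields $u(t)\in D(\textit{\textbf{A}})_+=X_{D+}\times D(K)_+$ for all $t\geq0$.

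The one step requiring genuine care is the treatment of the cross term $R(t)c_0$: the operator $R(t)\in B(X_C,X_D)$ is only obtained as an extension by density, so one must not argue with the integral formula on all of $X_C$, but rather use that $c_0\in D(K)$ — on which $R(t)$ coincides with $\tilde{R}(t)$ — together with $D(K)\subseteq D(C)$ from Lemma~\ref{lemma306} and the invariance $G_K(s)D(K)\subseteq D(K)$, so that $CG_K(s)c_0$ is well-defined and the integrand is pointwise nonnegative. An alternative, slightly heavier, route would be to show that the resolvent $R(\lambda,\textit{\textbf{A}})$ is a positive operator for all large $\lambda$, using the block-triangular inversion of $\lambda I-\textit{\textbf{A}}$ together with the positivity of $R(\lambda,A_D+B_D)$ (Lemma~\ref{lemma303}), $R(\lambda,K)$, and $C$, and then invoke the characterisation of positive semigroups \cite[Chapter 6, Theorem 1.8]{engel00} as in Lemma~\ref{lemma303}; but since the semigroup is already explicitly in hand, the direct componentwise argument above is the cleaner one.
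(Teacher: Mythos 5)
Your proposal is correct and follows essentially the same route as the paper's own proof: componentwise positivity via the explicit solution formula, using the substochasticity of $\left(G_K(t)\right)_{t\geq0}$, the positivity of $\left(T(t)\right)_{t\geq0}$ from Lemma~\ref{lemma303}, and the positivity of the kernel operator $C$ applied under the integral. Your extra care in justifying that $R(t)c_0$ coincides with $\tilde{R}(t)c_0$ for $c_0\in D(K)$ is a point the paper passes over silently, but it does not change the argument.
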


\begin{proof} Let $u_0\in D(\textit{\textbf{A}})_{+}$. Then the two components of the solution $u(t)$ are given by  \eqref{semigroupsols}. Since $\left(G_K(t)\right)_{t\geq0}$ is a substochastic semigroup, it preserves nonnegativity of the initial datum. Therefore, when $c_0\in D(K)_{+}$ we have $G_K(t)f_0\in D(K)_{+}$ for all $t\geq0$, hence the continuous regime solution satisfies $u_C(t)\in D(K)_{+}$ for all $t\geq0$.\\

\noindent As an integral operator with a nonnegative kernel, the operator $C$ is easily seen to be a positive operator. The semigroup $\left(T(t)\right)_{t\geq0}$ is a positive semigroup as was shown in Lemma~\ref{lemma303}.
Together with the nonnegativity of the continuous regime solution, these ensure that the integral term is a nonnegative contribution to $u_D(t)$ whenever $c_0\in D(K)_{+}$. As stated, $\left(T(t)\right)_{t\geq0}$ is a positive semigroup, and therefore $u_D(t)\in X_{D+}$ for all $t\geq0$ when the initial system state satisfies $u_0=\binom{d_0}{c_0}\in X_{D+}\times D(K)_{+}$.\\

\noindent Together, having  $u_D(t)\in X_{D+}$ and $u_C(t)\in D(K)_{+}$ gives us $u(t)\in D(\textit{\textbf{A}})_{+}$ as required.
\end{proof}

\noindent Let $u(t)=\binom{u_D(t)}{u_C(t)}$ be the solution to the abstract Cauchy problem \eqref{equation314} with nonnegative initial data. Then the masses in the discrete and continuous regimes, at time $t$, are given by

\[M_D(t)=\sum_{i=1}^{N}iu_{Di}(t)\hspace{2mm}\text{and}\hspace{2mm}M_C(t)=\int_{N}^{\infty}\left(u_C(t)\right)(x)\,x\dd x,\]
\noindent respectively. Summing these gives us the total mass in the system $M(t)=M_D(t)+M_C(t)$ at time $t$. In each fragmentation event, mass is redistributed from larger particles to the smaller resulting particles, but the total mass involved should be conserved. This mass-conservation was built into our model in the form of the conditions \eqref{equation303} and \eqref{equation304}. Therefore, although $M_D(t)$ may increase and $M_C(t)$
decrease as larger particles break into smaller pieces, we would expect the total mass, $M(t)$, to remain constant.
\begin{lemma}\label{lemma308}The total mass within the system is conserved, that is $M(t)$ remains constant for all $t\geq0$.
\end{lemma}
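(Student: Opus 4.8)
The plan is to differentiate the discrete and continuous mass functionals separately and to show that their rates of change are equal and opposite. Since the solution is nonnegative by Lemma~\ref{lemma307}, we have $M_C(t)=\int_{N}^{\infty}(u_C(t))(x)\,x\dd x=\|u_C(t)\|_{X_C}$ and $M_D(t)=\sum_{i=1}^{N}i\,u_{Di}(t)=\|u_D(t)\|_{X_D}$, so that $M(t)=\|u_D(t)\|_{X_D}+\|u_C(t)\|_{X_C}$. Because $c_0\in D(K)_{+}$ and, by Theorem~\ref{theorem303}, the semigroup $\left(G_K(t)\right)_{t\ge0}$ is honest, Definition~\ref{definition286} applies to $u_C(t)=G_K(t)c_0$ and yields
\[
\frac{\ddn}{\ddn t}M_C(t)=\frac{\ddn}{\ddn t}\|u_C(t)\|_{X_C}=-c\left(u_C(t)\right),
\]
where $c$ is the nonnegative functional from Theorem~\ref{theorem301}, namely $c(g)=\int_{N}^{\infty}\bigl(x-\int_{N}^{x}y\,b(y|x)\dd y\bigr)a(x)g(x)\dd x$.

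For the discrete regime, $u_D$ is strongly differentiable in the finite-dimensional space $X_D$ and satisfies \eqref{equation312}, so differentiating termwise gives $\frac{\ddn}{\ddn t}M_D(t)=\sum_{i=1}^{N}i\bigl[(A_D+B_D)u_D(t)+Cu_C(t)\bigr]_i$. The contribution of $A_D+B_D$ vanishes: expanding $\sum_{i=1}^{N}i[(A_D+B_D)v]_i=-\sum_{i=1}^{N}i a_i v_i+\sum_{i=1}^{N}\sum_{j=i+1}^{N}i a_j b_{i,j}v_j$, interchanging the order of the double sum, applying the discrete mass-conservation condition~\eqref{equation304} to replace $\sum_{i=1}^{j-1}i b_{i,j}$ by $j$, and recalling that $a_1=0$, the two single sums cancel. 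The contribution of $C$ equals $c(u_C(t))$: interchanging the finite sum with the integral gives $\sum_{i=1}^{N}i(Cg)_i=\int_{N}^{\infty}a(y)g(y)\bigl(\sum_{i=1}^{N}i b_i(y)\bigr)\dd y$, and the continuous-to-discrete mass-conservation condition~\eqref{equation303}, which reads $\sum_{i=1}^{N}i b_i(y)=y-\int_{N}^{y}x\,b(x|y)\dd x$, identifies this with $c(g)$. Hence $\frac{\ddn}{\ddn t}M_D(t)=c\left(u_C(t)\right)$.

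Adding the two identities, $\frac{\ddn}{\ddn t}M(t)=c(u_C(t))-c(u_C(t))=0$ for every $t\ge0$; since $M$ is continuous in $t$ (both $M_D$ and $M_C$ are, as $u(t)=\textit{\textbf{T}}(t)u_0$ depends continuously on $t$) and has identically vanishing derivative, $M(t)=M(0)$ for all $t\ge0$, which is the claim. The steps that need a word of justification are that $c(u_C(t))$ is meaningful and that Definition~\ref{definition286} genuinely applies (both guaranteed by honesty, which extends $c$ to $D(K)$ and keeps $u_C(t)=G_K(t)c_0\in D(K)_{+}$), that $Cu_C(t)$ is well-defined (the $D(K)$-extension of $C$, Lemma~\ref{lemma306}), and that $M_D$ may be differentiated componentwise (immediate since $X_D$ is finite-dimensional). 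The only genuine content, and the step I expect to be the main though not difficult obstacle, is the bookkeeping with the conservation conditions~\eqref{equation303} and~\eqref{equation304} showing that the mass transferred into the discrete regime through $C$ is exactly the mass $c(u_C(t))$ that the honesty of the continuous semigroup says is leaving the continuous regime.
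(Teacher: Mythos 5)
Your proposal is correct and follows essentially the same route as the paper: honesty of $\left(G_K(t)\right)_{t\geq0}$ gives $\frac{\ddn}{\ddn t}M_C(t)=-c(u_C(t))$, while termwise differentiation of the discrete equation together with conditions~\eqref{equation304} and~\eqref{equation303} shows $\frac{\ddn}{\ddn t}M_D(t)=c(u_C(t))$, so the two rates cancel. The only cosmetic difference is that you identify the $C$-contribution with $c(u_C(t))$ directly via~\eqref{equation303}, whereas the paper keeps it as $\int_{N}^{\infty}\bigl(\sum_{i=1}^{N}ib_i(y)\bigr)a(y)(u_C(t))(y)\dd y$ and invokes~\eqref{equation303} only when adding the two rates.
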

\begin{proof} Since $\left(G_K(t)\right)_{t\geq0}$ is an honest semigroup, by Definition~\ref{definition286} we have
\begin{align}\label{equation316}
\frac{\ddn}{\ddn t}M_C(t)
&=\frac{\ddn}{\ddn t}\int_{N}^{\infty}\left(u_C(t)\right)(x)\,x\dd x=-c\left(u_C(t)\right)\nonumber\\
                  &=-\int_{N}^{\infty}\left(x-\int_{N}^{x}yb(y|x)\dd y\right)a(x)\left(u_C(t)\right)(x)\dd x.
\end{align}
Similarly, for the mass within the discrete regime we have
\begin{align}
&\frac{\ddn}{\ddn t}M_D(t)=\sum_{i=1}^{N}i\frac{\ddn}{\ddn t}u_{Di}(t)=\sum_{i=1}^{N}i\left((A_D+B_D)[u_D(t)]+C[u_F(t)]\right)_i\nonumber\\
&=\sum_{i=1}^{N}i\left( -a_i u_{Di}(t)+\hspace{-2mm}\sum_{j=i+1}^{N}\hspace{-2mm}a_j b_{i,j} u_{Dj}(t)+\hspace{-1mm}\int_{N}^{\infty}\hspace{-2mm}a(y)b_i(y)\left(u_{C}(t)\right)(y)\dd y\right)\nonumber\\
&=-\sum_{i=2}^{N}\left(i-\sum_{j=1}^{i-1}j b_{j,i}\right)a_i u_{Di}(t)+\int_{N}^{\infty}\left(\sum_{i=1}^{N}ib_i(y)\right)a(y)\left(u_{C}(t)\right)(y) \dd y \nonumber\\
&=\int_{N}^{\infty}\left(\sum_{i=1}^{N}ib_i(y)\right)a(y)\left(u_{C}(t)\right)(y)\dd y.\label{equation317}
\end{align}

\noindent The term $a_1 u_{D1}(t)$ is dropped from the first summation in going to the third line since $a_1=0$.
In the final step, the loss of the first term is a consequence of condition \eqref{equation304}. The rate of change of the total mass $M(t)$ is given by the addition of \eqref{equation316} and \eqref{equation317}, which by \eqref{equation303} yields
\begin{align}
\frac{\ddn}{\ddn t}M(t)=&-\int_{N}^{\infty}\left(x-\int_{N}^{x}yb(y|x)\dd y\right)a(x)\left(u_C(t)\right)(x)\dd x\nonumber\\
&+\int_{N}^{\infty}\left(\sum_{i=1}^{N}ib_i(x)\right)a(x)\left(u_{C}(t)\right)(x)\dd x\nonumber\\
=&-\int_{N}^{\infty}\left(x-\int_{N}^{x}yb(y|x)\dd y-\sum_{i=1}^{N}ib_i(x)\right)a(x)\left(u_C(t)\right)(x)\dd x\nonumber\\
=&\hspace{2mm}0,\nonumber
\end{align}
thus confirming that $M(t)$ remains constant for all $t\geq0$, and so mass is conserved.
\end{proof}

\section{Mixed Discrete--Continuous Example Model}\label{section71}

\noindent The concept of a hybrid discrete--continuous model, as developed in this paper, was proposed as a solution to  `shattering' mass-loss. We shall now introduce a particular class of such models, with the aim of confirming the findings of this work. The model is based upon the power law fragmentation model, with the continuous regime equation \eqref{equation301}, being specified by
\begin{equation}\label{equation701}
a(x)=x^\alpha,\hspace{4mm}\alpha\in\mathbb{R},\hspace{5mm}\text{and}\hspace{5mm}b(x|y)=(\nu+2)\frac{x^\nu}{y^{\nu+1}},\hspace{4mm}-2<\nu\leq0,
\end{equation}
for $N<x\leq y$. The analogous purely continuous model corresponds to the class considered by McGrady and Ziff in \cite{ziff87} and found to display mass-loss through `shattering' in the case that $\alpha<0$. With such a model selection, our continuous regime equation becomes

\begin{align}\label{equation702}
\frac{\partial u_C(x,t)}{\partial t}&=-x^\alpha u_C(x,t)+(\nu+2)x^\nu \int_{x}^{\infty} y^{\alpha-\nu-1}u_C(y,t)\dd y, \hspace{1.92mm}x>N,\hspace{1.2mm}t>0.
\end{align}

\noindent Moving on to the discrete regime equation, \eqref{equation302}, in specifying our discrete regime equation we must provide a set of continuous to discrete mass distribution functions $b_i(y)$, $i=1,\ldots,N$, such that condition~\eqref{equation303} is satisfied. If we take these functions to be given by
\begin{align*}
b_i(y)=\frac{i^{\nu+2}-(i-1)^{\nu+2}}{iy^{\nu+1}}, \hspace{5mm}y>N,\hspace{1.2mm}i=1,\ldots,N,
\end{align*}
where the value of $\nu$ is the same as in~\eqref{equation701} and~\eqref{equation702}, then it is easily verified that condition~\eqref{equation303} is satisfied.\\

\noindent Finally we must specify a choice for the discrete fragmentation parameter values $a_i$ and $b_{i,j}$, where the $b_{i,j}$ must satisfy~\eqref{equation304}. There are a number of different choices for these values considered in the literature. However, for our model we shall take the case of
uniform binary fragmentation, whereby two particles are produced from each fragmentation event, and all admissible pairings of resulting particle sizes are equally likely. This is obtained by setting
\[b_{i,j}=\frac{2}{j-1}, \hspace{5mm}i=1,\ldots,N-1,\hspace{1.2mm}j=i,\ldots,N.\]
Primarily we have selected this particular model for its simplicity, however versions of this model were studied in \cite{simha41}, one of the earliest articles on discrete fragmentation, and also in the paper \cite{ziff92}. \\

\noindent When it comes to the selection of the values $a_i$, there are minimal restrictions which must be satisfied and we can largely select any nonnegative values we wish. However, whereas $a_i=1$, was selected in \cite{simha41} and $a_i=(i-1)/(i+1)$, in \cite{ziff92}, we shall take
\[a_i=i^\alpha, \hspace{5mm}i=2,\ldots,N,\]
with $a_1=0$, in order to mirror the choice for the continuous fragmentation rate $a(x)$. Taking these selections leads to the following set of equations for the discrete regime:
\begin{align}\label{equation704}
\frac{\ddn u_{Di}(t)}{\ddn t} =&-i^\alpha u_{Di}(t)+\sum_{j=i+1}^{N} \frac{2j^\alpha}{j-1} u_{Dj}(t)\nonumber\\
&+\frac{i^{\nu+2}-(i-1)^{\nu+2}}{i}\int_{N}^{\infty}y^{\alpha-\nu-1} u_{\small{C}}(y,t)\dd y,
\end{align}
for $i=1,\ldots,N$ and $t>0$, where we lose the $-i^\alpha u_{Di}(t)$ term for $i=1$ and the summation term disappears for $i=N$.

\subsection{Exact Solutions}

\noindent The continuous regime equation~\eqref{equation702}, coincides with the one provided for the `fragmentation state' in \cite[Section 3]{huang96}. When this equation
is coupled with an initial continuous mass distribution $c_0(x)$, then \cite[Equation (9)]{huang96} gives the solution as
\begin{align}\label{equation705}
u_C(x,t)\hspace{-.5mm}= \hspace{-.5mm}{\mathrm{e}}^{-x^\alpha t}
 \hspace{-.5mm}\left\lbrace  \hspace{-.5mm}c_0(x) \hspace{-.5mm}+ \hspace{-.5mm} m\hspace{.35mm}\alpha \hspace{.35mm} t\hspace{.35mm} x^\nu \hspace{-1.5mm}\int_x^\infty \hspace{-2.5mm} y^{\alpha-\nu-1}c_0(y)_{1}F_1\left(1-m,2,t(x^\alpha\hspace{-.7mm}-y^\alpha)\right) \hspace{-.5mm} \dd y\right\rbrace,
\end{align}
\noindent where $m=(2+\nu)/\alpha$ and $_{1}F_1$ is the confluent hypergeometric function. To solve the set of discrete regime equations~\eqref{equation704}, we write them as the system
\begin{equation}\label{equation706}
\frac{\ddn}{\ddn t}\underline{u}_D(t)=E\underline{u}_D(t)+\underline{F}(t),
\end{equation}
where $\underline{u}_D(t)=(u_{D1}(t),\ldots,u_{DN}(t))^T$, the $N\times N$ matrix $E$ has the entries
\[
e_{i,j}=\left\{ \begin{aligned}
&\hspace{6mm}0\hspace{8.8mm} \textup{for} \hspace{2mm} i=j=1\hspace{1mm} \textup{or} \hspace{1mm}i>j, \\
&\hspace{2mm} -i^\alpha \hspace{6.5mm} \textup{for} \hspace{2mm} i=j>1,\\
&\hspace{2mm}\frac{2j^\alpha}{j-1}\hspace{5.2mm} \textup{for} \hspace{2mm} j>i,\\
\end{aligned} \right.
\]
\noindent and where $\underline{F}:[0,\infty)\rightarrow \mathbb{R}^N$ has the components
\begin{equation}\label{equation707}
F_i(t)=\underbrace{\frac{i^{\nu+2}-(i-1)^{\nu+2}}{i}}_{\beta_i(\nu)}\int_{N}^{\infty} y^{\alpha-\nu-1}u_C(y,t)\dd y,
\end{equation}
\noindent for $i=1,\ldots,N$, and $u_C(y,t)$ given by~\eqref{equation705}. When the initial state of the vector
$\underline{u}_D(t)$ is $d_0$, then the system~\eqref{equation706} has the solution
\begin{equation*}
\underline{u}_D(t)={\mathrm{e}}^{E t}\left\lbrace d_0+\int_0^t{\mathrm{e}}^{-sE}\left(\int_{N}^{\infty} y^{\alpha-\nu-1}u_C(y,s)\dd y\right)\hspace{-.5mm}\dd s\hspace{.7mm}\underline{\beta}(\nu)\right\rbrace,
\end{equation*}
where $\underline{\beta}(\nu)$ is the vector of values $(\beta_1(\nu),\ldots,\beta_N(\nu))^T$ from~\eqref{equation707}, and ${\mathrm{e}}^{M}$  denotes the matrix exponential of a matrix $M$.

\subsection{Example Case 1  ($\alpha=-1$ and $\nu=0$)}
\noindent Taking the model introduced above, we examine the particular case of $\alpha=-1$ and $\nu=0$. Taking such a choice of parameters in the standard continuous model has been shown to result in a shattering process \cite{ziff87}. The cut-off parameter $N$ was set at $5$ and a truncated uniform initial mass distribution imposed, whereby ${d_0}_i=1$ for $i=1,\ldots,5$ and
\begin{equation}\label{IC}
c_{0}(x)=\left\{ \begin{aligned}
\hspace{2mm} 1\hspace{4mm} &\textup{for} \hspace{2mm} 5<x<15, \\
\hspace{2mm} 0\hspace{4mm} &\textup{for} \hspace{2mm} x\geq 15.
\end{aligned} \right.
\end{equation}
The final time $T$ was taken as $100$, which gave sufficient time for this particular system to settle to its equilibrium. Below can be seen a selection of charts depicting the mass distribution at a selection of time points through the evolution of the system, starting from the uniform initial state through to the final equilibrium state.

\begin{figure}[H]
\begin{minipage}[b]{0.45\linewidth}
\centering
\includegraphics[scale=0.5]{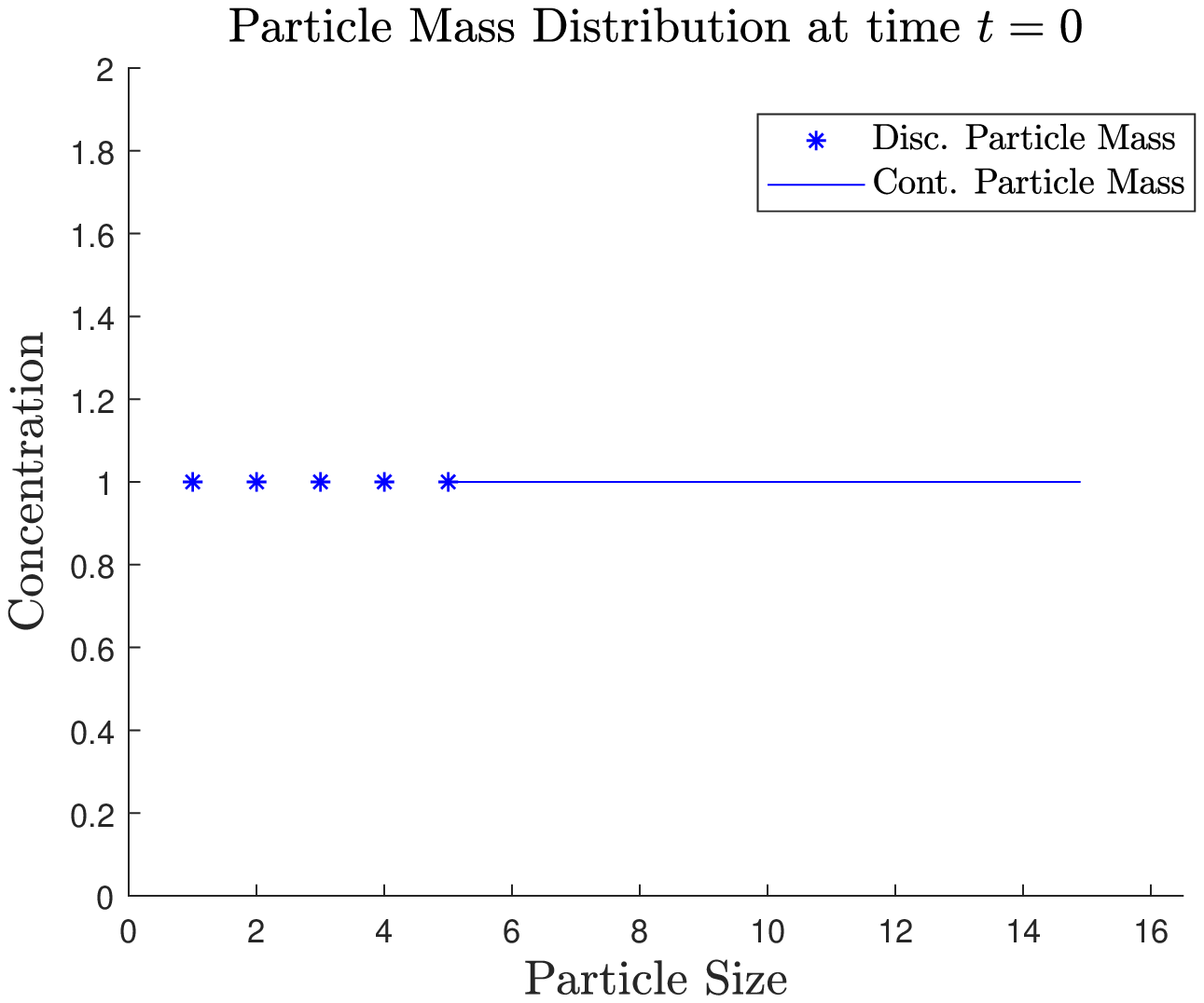}\\\hspace{0mm}\\
\includegraphics[scale=0.5]{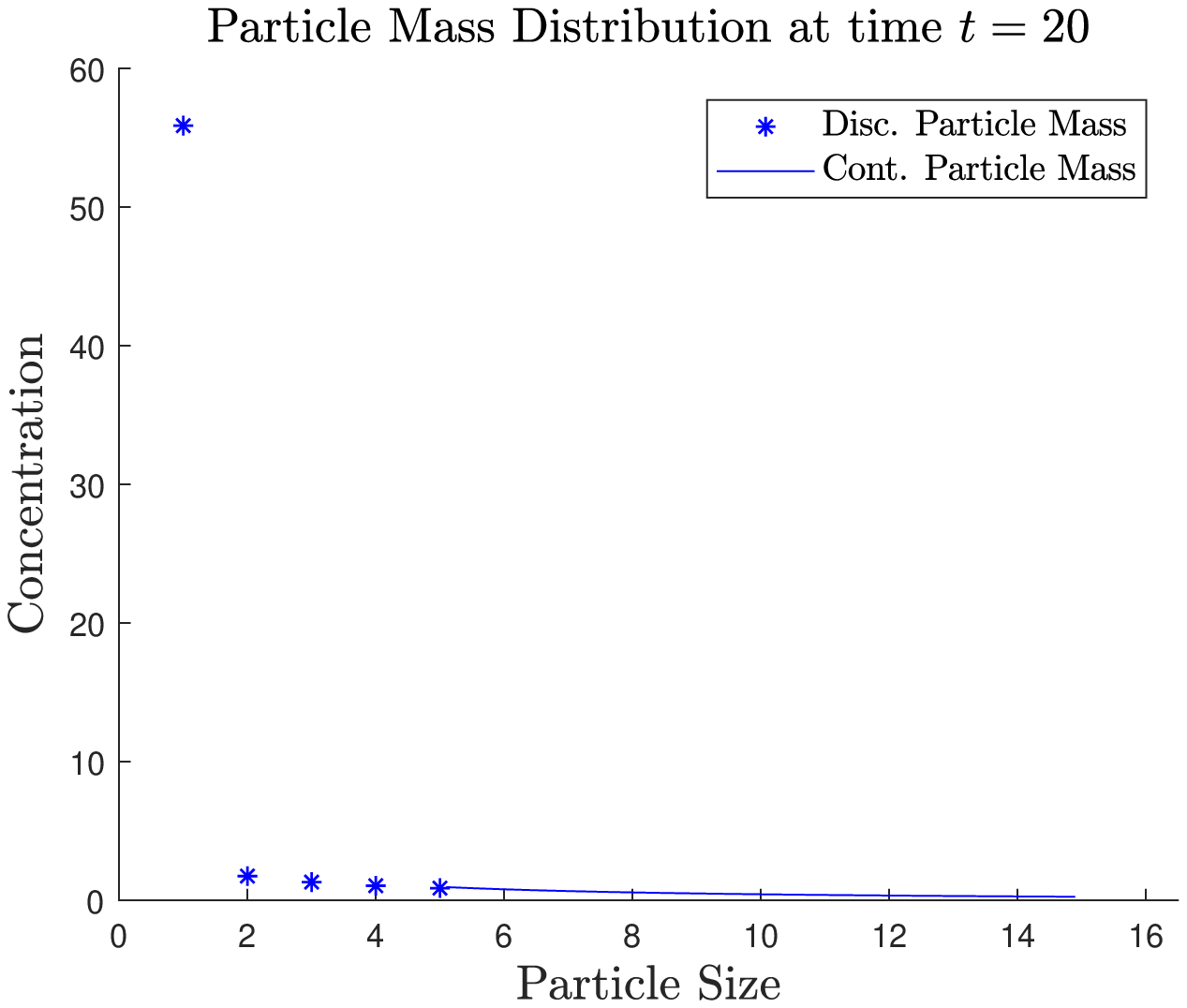}
\end{minipage}
\hspace{0.5cm}
\begin{minipage}[b]{0.45\linewidth}
\centering
\includegraphics[scale=0.5]{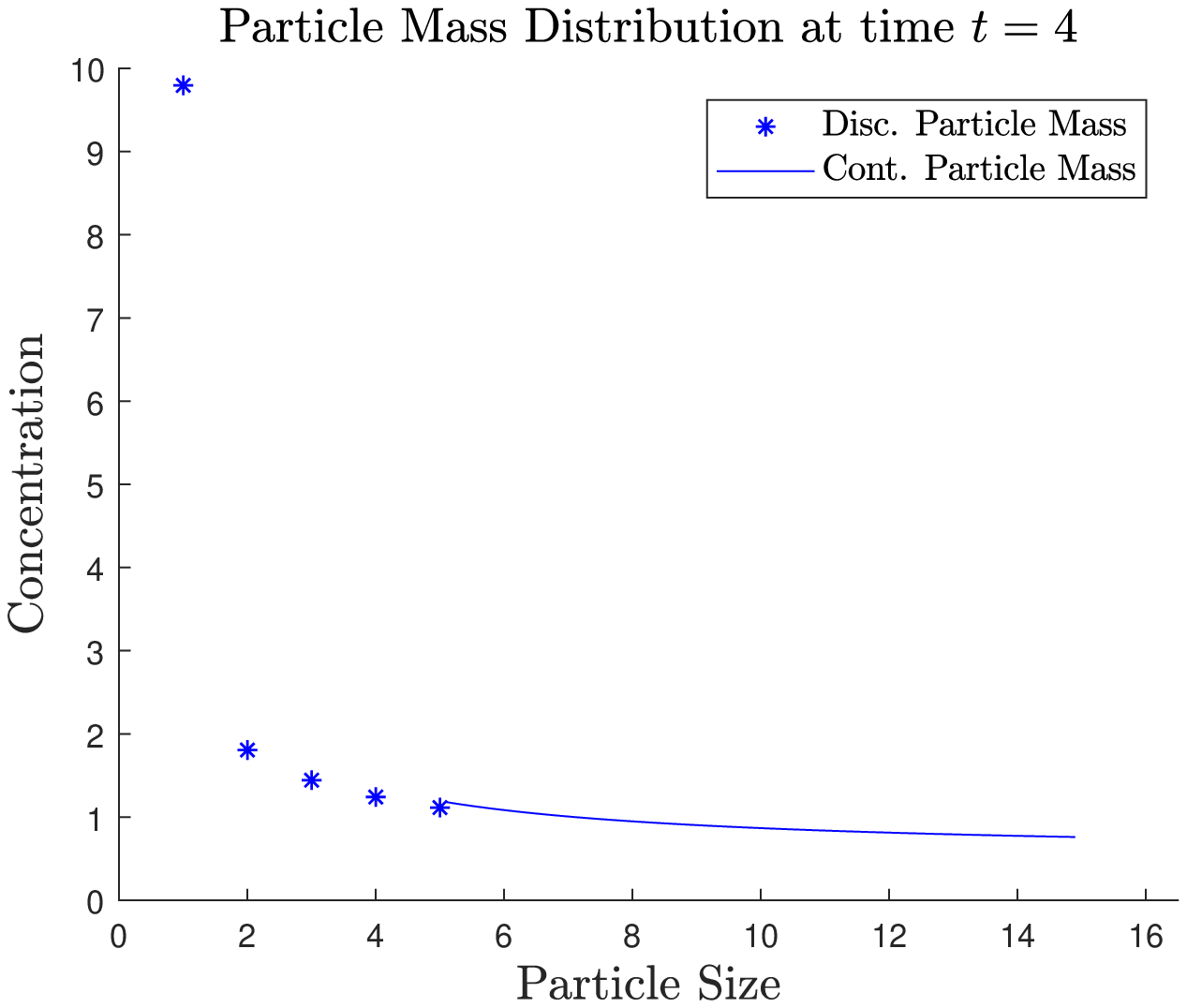}\\\hspace{0mm}\\
\includegraphics[scale=0.5]{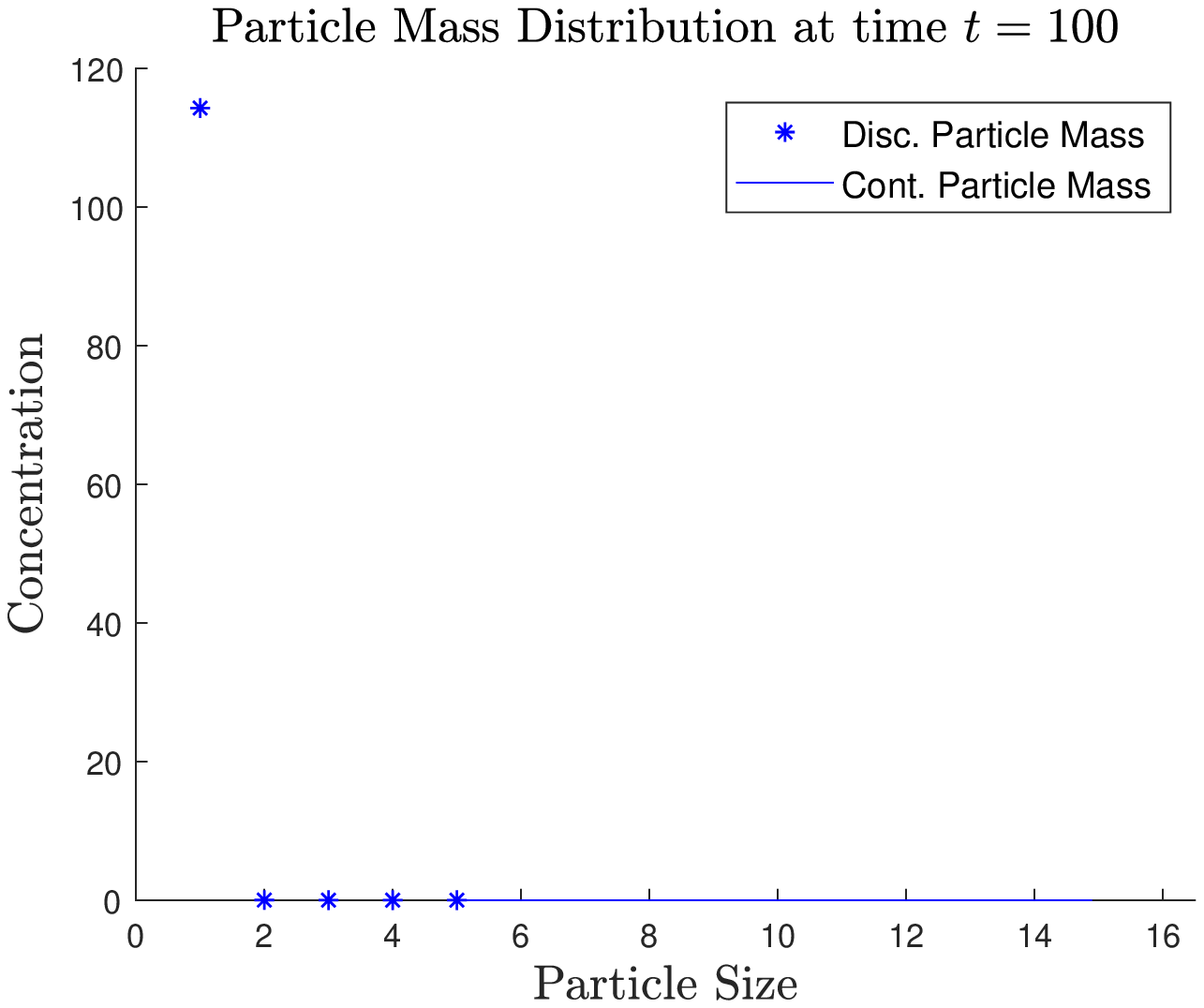}
\end{minipage}
\caption{Particle mass distribution at time $t=0,4,20$ and $100$.}\label{figure701}
\end{figure}

\noindent The above charts depict the behaviour one would expect given the physical nature of the model, with the mass becoming increasingly concentrated amongst the smaller sized particles. Additionally, the solution can be seen to remain nonnegative, as predicted by Lemma~\ref{lemma307}.
However, to get a clearer picture of whether the issue of `shattering' has been resolved we must examine the evolution of the total mass within the system.

\begin{figure}[H]
\centering
\includegraphics[scale=0.8]{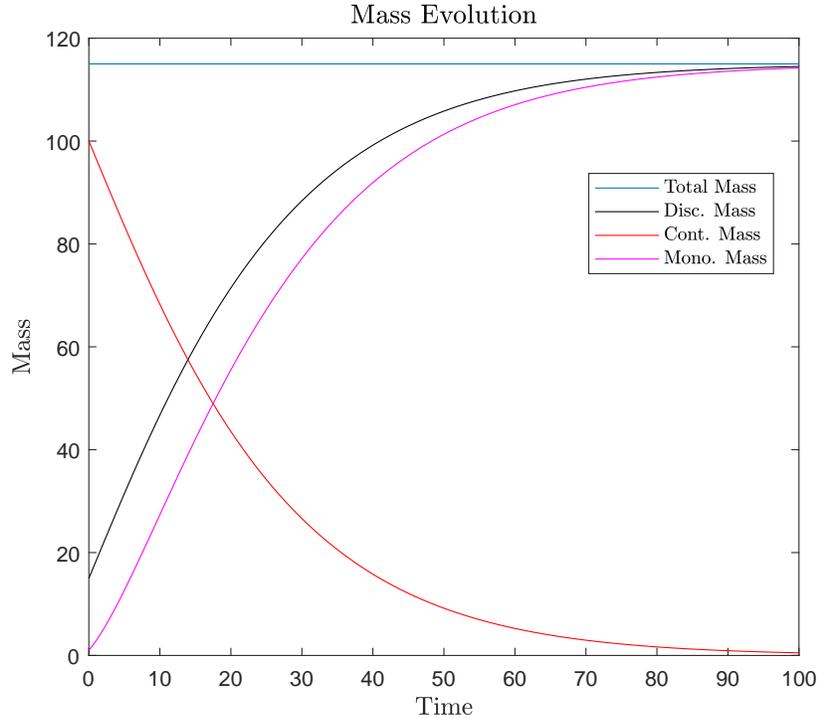}
\caption{The evolution of masses with time.}\label{figure702}
\end{figure}
\noindent Figure~\ref{figure702} details the evolution of the total mass (blue), along with the mass accounted for by the continuous regime (red), the total mass within the discrete regime (black) and the total mass accounted for by monomers (magenta).\\

\noindent As we would hope and as was predicted by Lemma~\ref{lemma308}, the total mass within the system remains constant, with a reduction in the continuous regime total mass being balanced by a gain in the total mass of the discrete regime, resulting from the fragmentation of larger continuous mass particles into smaller discrete mass particles. As time evolves, the mass accounted for by monomers grows to form an ever larger proportion of the total mass, until we reach a stage where they constitute the vast majority of the total mass and the system approaches an equilibrium.\\

\subsection{Example Case 2 ($\alpha=0.5$ and $\nu=-0.5$)}

\noindent In order to investigate the model behaviour and whether it fits with our physical intuition regarding the system, we vary the model parameters and observe the effect on the computed solutions. In this example we set $\alpha =0.5$ and $\nu=-0.5$, which has the effect of increasing the fragmentation rate and changing the
resulting size distribution for fragmentation events, to favour smaller particles. We would expect both of these changes to speed up the fragmentation process, and for equilibrium to be reached quicker than in the previous case. As before, we selected $N=5$, with the same initial state \eqref{IC}. The final time $T$ was taken as $5$, which we found to be sufficient for the system to reach a (near) equilibrium state and gave rise to the graphs in Figure~\ref{figure704}.

\begin{figure}[H]
\centering
\includegraphics[scale=0.8]{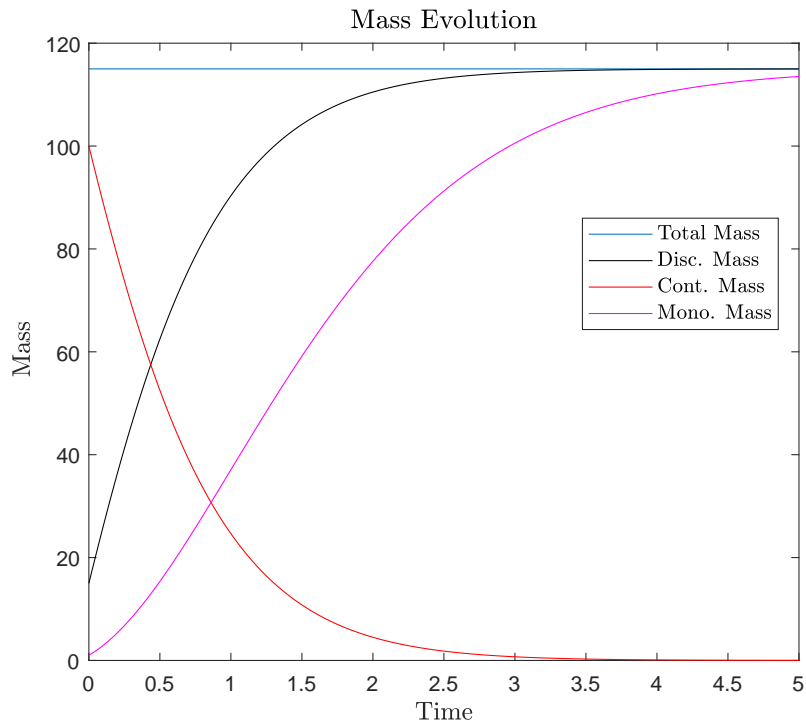}
\caption{The evolution of masses with time.}\label{figure704}
\end{figure}

\noindent As before, we see that the total mass (blue) is conserved with the loss from the continuous regime (red) being balanced by an increase in the discrete regime (black). However if we compare the model behaviour to that observed in Figure~\ref{figure702}, we see that the process reaches an equilibrium state significantly quicker than in the previous case, as expected from an intuitive consideration of the model setup.

\section{Conclusions}
\noindent In this work we introduced a mixed discrete--continuous fragmentation model \eqref{equation301} and~\eqref{equation302}. Utilising the theory of operator semigroups we were able to prove that under certain restrictions on the fragmentation rate, there exists a unique, strong solution to our system within the setting of the appropriate Banach space. In turn, this enabled us to establish the existence of a unique classical solution to our equations. Finally, we showed that these solutions preserve nonnegativity and conserve mass, two properties we would expect from a physically relevant solution. We have illustrated the theoretical results by considering a specific example of such a mixed model, an example based upon existing models, which appear commonly in the literature of the field. The examples corroborated the analysis of the paper, with the solutions displaying the nonnegativity and mass conservation predicted.

\section*{Acknowledgments}
\noindent This work was supported by the UK Engineering and Physical Sciences Research Council [EP/J500495/1  03].

\bibliographystyle{elsarticle-num}
\bibliography{refs}

\end{document}